\newtheorem{theorem}{Theorem}[section]
\newtheorem{corollary}[theorem]{Corollary}
\newtheorem{proposition}{Proposition}
\theoremstyle{definition}
\theoremstyle{remark}
\numberwithin{equation}{section}
\newcommand{\spt}[1]{\mbox{\normalfont spt}\Parans{#1}}
\newcommand{\Mspt}[2]{\mbox{\normalfont M2spt}_{#1}\Parans{#2}}
\newcommand{\Parans}[1]{\left(#1\right)}
\newcommand{\CBrackets}[1]{\left\{#1\right\}}
\newcommand{\SBrackets}[1]{\left[#1\right]}
\newcommand{\PieceTwo}[4]
{
	\left\{
   	\begin{array}{ll}
      	#1 & #3 \\
       	#2 & #4
     	\end{array}
	\right.
}
\newcommand{\aqprod}[3]{\Parans{#1;#2}_{#3}}
\newcommand{\Jac}[2]{\left(\frac{#1}{#2}\right)}
\newcommand{\Bin}[2]
{
	\left(
   	\begin{array}{c}
      	#1\\
       	#2
     	\end{array}
	\right)
}
\newcommand{\Floor}[1]{\lfloor #1 \rfloor}
\newcommand{\Delz}{\delta_z}
\newcommand{\Delq}{\delta_q}
\newcommand{\CommonSpace}[1]{
	\frac{\aqprod{-q}{q^2}{\infty}}{\aqprod{q^2}{q^2}{\infty}}
	\cdot\mathcal{W}_{#1}(\Gamma_0(4))
}
\author{CHRIS JENNINGS-SHAFFER}
\address{Department of Mathematics, University of Florida\\
Gainesville, Florida 32611, USA
\endgraf cjenningsshaffer@ufl.edu}
\keywords{Number theory, partitions, smallest parts function, congruences,
rank moments, crank moments, higher order spt functions.}
\subjclass[2010]{Primary 11P82, 11P83}
\title{Rank and Crank Moments for Partitions without Repeated Odd Parts}
\begin{document}

\allowdisplaybreaks

\begin{abstract}
Using quasimodular forms with respect to $\Gamma_0(4)$ we find exact relations between the
M2-rank for partitions without repeated odd parts and three residual cranks.
From these identities we are able to deduce various congruences mod $3$ and $5$
between the rank and crank moments. In turn, these congruences give
congruences for $\Mspt{}{n}$, the number of occurrences of smallest parts in
the partitions of $n$ with smallest part even and without repeated odd parts,
and for the higher order analog $\Mspt{2}{n}$.
\end{abstract}

\maketitle

\section{Introduction and Statement of Results}

\allowdisplaybreaks

In \cite{AG} Atkin and Garvan found exact relations between the moments of the 
rank and crank generating function of partitions. This came out of a partial 
differential equation satisfied by the rank and crank. Garvan in \cite{Garvan4} 
used these equations to deduce various congruences for Andrews' spt function
and again used the equations in \cite{Garvan5} to prove congruences for higher
order analogs of the spt function. 
In \cite{BLO2}, this same technique was used by Bringmann, Lovejoy, and Osburn 
to prove congruences for spt functions related to overpartitions. We use this idea
to prove congruences for the M2spt function as well as a higher order analog.
First we review the necessary definitions.

We recall a partition of a positive integer $n$ is a non-increasing sequence of positive
integers that sum to $n$. For example, the partitions of $4$ are
$4$, $3+1$, $2+2$, $2+1+1$, and $1+1+1+1$. We let $p(n)$ denote the number
of partitions of $n$, so $p(4)=5$. In \cite{Andrews} Andrews introduced the smallest
parts function, $\spt{n}$, a weighted count of the partitions of $n$. The idea is the 
count each partition by the number of times the smallest part occurs. We then
have $\spt{4} = 10$.

For other types of partitions it is also natural to consider a corresponding
smallest parts function. The restrictions we consider are partitions where the
odd parts do not repeat and the smallest part is even. For example, such 
partitions of $11$ are $9+2$, $7+4$, $7+2+2$, $6+3+2$, $5+4+2$, $5+2+2+2$,
$4+3+2+2$, and $3+2+2+2+2$. We let $\Mspt{}{n}$ denote the number of smallest parts
in these partitions of $n$, so $\Mspt{}{11}=15$.

Two statistics often associated to partitions are the rank and crank of a 
partition. One point of interest is that these statistics explain certain
linear congruences satisfied by $p(n)$. 
The rank of a partition is the largest part minus the number of parts. We let $N(m,n)$
denote the number of partitions of $n$ with rank $m$ and set
\begin{align*}
	R(z,q) 
	&= 
	\sum_{n=0}^\infty\sum_{m=-\infty}^\infty N(m,n)z^mq^n
.
\end{align*}
The crank of a partition is the largest part if there are no ones and otherwise
is the number of parts larger than the number of ones minus the number of ones.
After suitably altering the interpretations for $n=0$ and $n=1$, one has that 
\begin{align*}
	C(z,q)
	&=
	\sum_{n=0}^\infty\sum_{m=-\infty}^\infty M(m,n)z^mq^n
	=
	\frac{\aqprod{q}{q}{\infty}}{\aqprod{zq,z^{-1}q}{q}{\infty}}
.
\end{align*}
We then have the rank and crank moments given by
\begin{align*}
	&N_k(n) = \sum_{m=-\infty}^\infty m^k N(m,n)
	,
	&M_k(n) = \sum_{m=-\infty}^\infty m^k M(m,n)
.
\end{align*}
We will use two differential operators:
\begin{align*}
	\Delq &= q\cdot \frac{\partial}{\partial q}
	,
	&\Delz = z\cdot \frac{\partial}{\partial z}
.
\end{align*}
The generating functions for the rank and crank moments are given by
\begin{align*}
	R_k &= R_k(q) = \sum_{n=0}^\infty N_k(n) q^n
	= \Delz^k R(z,q)  |_{z=1}
	,\\
	C_k &= C_k(q) = \sum_{n=0}^\infty M_k(n) q^n
	= \Delz^k C(z,q)  |_{z=1}
.
\end{align*}
We note for odd $k$ that these moments are zero.

We recall the weight $k$ Eisenstein series is given by
\begin{align*}
	E_{k}(q) &= 1 - \frac{2k}{B_{k}}\sum_{n=1}^\infty \sigma_{k-1}(n)q^n
,
\end{align*}
where $B_k$ are the Bernoulli numbers and $\sigma_{k-1}$
is a sum of divisors function
\begin{align*}
	\sigma_{k-1}(n) &= \sum_{d\mid n} d^{k-1}
.
\end{align*}
We note that 
\begin{align*}
	E_{k}(q) 
	&= 
	1 - \frac{2k}{B_{k}}\sum_{n=1}^\infty \frac{n^{k-1}q^n}{1-q^n}
.
\end{align*}
For even $k>2$, $E_{k}(q)$ is the expansion at $\infty$ of a modular form of 
weight $k$ for the full modular 
group $\Gamma_0(1)$.

We let $M_{k}(\Gamma_0(N),\chi)$ denote the finite dimensional vector space
of modular forms of weight $k$ with respect to the congruence subgroup 
$\Gamma_0(N)$ of $\Gamma_0(1)$ with character $\chi$. The various facts we use
about modular forms can be found in \cite{Ono1}. Generally we will work with modular forms in
terms of their $q$-series expansions, rather than as a function of $\tau$ in the
upper half plane with $q=e^{2\pi i\tau}$. However at times it will be convenient
to also work with $\tau$, equations in which both $q$ and $\tau$ appear should 
be interpreted in terms of $q$-series.

We let $\mathcal{W}_{n}(\Gamma_0(N))$ denote the vector space of quasimodular forms
with respect to $\Gamma_0(N)$ of weight at most $2n$ and with zero constant term in the
$q$-expansion at infinity. Elements of this vector space can be viewed as polynomials
in $E_2(q)$ and holomorphic modular forms of non-negative weight on 
$\Gamma_0(N)$ where each monomial is of weight at most $2n$ (viewing $E_2(q)$ as
having weight $2$). 

We will work with vector spaces of the form 
$A\cdot\mathcal{W}_{n}(\Gamma_0(N))$, where $A$ is some fixed product. We note
the dimension of $A\cdot\mathcal{W}_{n}(\Gamma_0(N))$ is the same as
$\mathcal{W}_{n}(\Gamma_0(N))$

In \cite{AG} Atkin and Garvan proved each $\Delq^j C_{2k}$ is an element of
$\frac{1}{\aqprod{q}{q}{\infty}}\cdot\mathcal{W}_{j+k}(\Gamma_0(1))$. Although
each individual $\Delq^j R_{2k}$ is not necessarily in any of the spaces
$\frac{1}{\aqprod{q}{q}{\infty}}\cdot\mathcal{W}_{m}(\Gamma_0(1))$,
these spaces do contain certain linear combinations of the $\Delq^j R_{2k}$. 
Bringmann, Lovejoy, and Osburn proved similar facts for 
overpartitions in \cite{BLO2}. Here we prove the corresponding theorem for
partitions without repeated odd parts.

The $M_2$-rank of a partition $\pi$ without repeated odd
parts is given by
\begin{align}
	M_2\mbox{-rank} = \left\lceil\frac{l(\pi)}{2} \right\rceil - \#(\pi),
\end{align}
where $l(\pi)$ is the largest part of $\pi$ and $\#(\pi)$ is the number of
parts of $\pi$. The $M_2$-rank was introduced by Berkovich and Garvan
in \cite{BG2} and was further studied by Lovejoy and Osburn in \cite{LO2}. 
We let $N2(m,n)$ denote the number of partitions of $n$ with
distinct odd parts 
and $M_2$-rank $m$ and let
\begin{align*}
	R2(z,q) 
	&= 
	\sum_{n=0}^\infty\sum_{m=-\infty}^\infty N2(m,n)z^mq^n
.
\end{align*}

We use a residual crank from \cite{GarvanJennings}. For a partition $\pi$ 
without repeated odd parts, we consider the crank of the partition 
$\frac{\pi_e}{2}$ obtained by omitting the odd parts and halving each 
even part of $\pi$. 
Letting $M2(m,n)$ denote the number of partitions of $n$ without repeated 
odd parts and this residual crank $m$, upon suitably
altering this definition for such partitions whose only even part is a single
$2$, we find the generating function is given by
\begin{align*}
	C2(z,q)
	&=
	\sum_{n=0}^\infty\sum_{m=-\infty}^\infty M2(m,n)z^mq^n
	= 
		\frac{\aqprod{-q}{q^2}{\infty}\aqprod{q^2}{q^2}{\infty}}
			{\aqprod{zq^2}{q^2}{\infty}\aqprod{z^{-1}q^2}{q^2}{\infty}}
	.
\end{align*}

It turns out we will need two more residual crank like functions.
We let
\begin{align}
	C1(z,q)
	&=
	\sum_{n=0}^\infty\sum_{m=-\infty}^\infty M1(m,n)z^mq^n
	= 
		\frac{\aqprod{q^2}{q^4}{\infty}\aqprod{q}{q}{\infty}}
			{\aqprod{zq}{q}{\infty}\aqprod{z^{-1}q}{q}{\infty}}
	,\\
	C4(z,q)
	&=
	\sum_{n=0}^\infty\sum_{m=-\infty}^\infty M4(m,n)z^mq^n
	= 
		\frac{\aqprod{q^4}{q^4}{\infty}}
			{\aqprod{q}{q^2}{\infty}\aqprod{zq^4}{q^4}{\infty}\aqprod{z^{-1}q^4}{q^4}{\infty}}
.
\end{align}
While $M4(m,n)$ can be interpreted as a residual crank, 
such an interpretation for $M1(m,n)$ is not immediately clear.

We use the various moments
\begin{align*}
	&M1_k(n) = \sum_{m=-\infty}^\infty m^k M1(m,n)
	,
	&M2_k(n) = \sum_{m=-\infty}^\infty m^k M2(m,n)
	,\\
	&M4_k(n) = \sum_{m=-\infty}^\infty m^k M4(m,n)
	,
	&N2_k(n) = \sum_{m=-\infty}^\infty m^k N2(m,n) 
\end{align*}
and their generating functions
\begin{align*}
	C1_k &= C1_k(q) = \sum_{n=0}^\infty M1_k(n) q^n
	= \Delz^k C1(z,q)  |_{z=1}
	,\\ 
	C2_k &= C2_k(q) = \sum_{n=0}^\infty M2_k(n) q^n
	= \Delz^k C2(z,q)  |_{z=1}
	,\\
	C4_k &= C4_k(q) = \sum_{n=0}^\infty M4_k(n) q^n
	= \Delz^k C4(z,q)  |_{z=1}
	,\\ 
	R2_k &= R2_k(q) = \sum_{n=0}^\infty N2_k(n) q^n
	= \Delz^k R2(z,q)  |_{z=1}
.
\end{align*}
We note for odd $k$ that these moments are zero. To reduce the length of our
calculations, we will often write the various functions without their 
dependence on $z$ and $q$.

The purpose of this paper is to establish the following theorem and
use it to prove various congruences.
\begin{theorem}\label{TheoremMain}
For $N\ge 1$ the following are elements of 
$\frac{\aqprod{-q}{q^2}{\infty}}{\aqprod{q^2}{q^2}{\infty}}\cdot\mathcal{W}_{N}(\Gamma_0(4))$,
\begin{enumerate}
\item[(i)]
	the functions $\Delq^m(C1_{2j})$
	for $m\ge 0$, $1\le j\le N$, $j+m\le N$, 
\item[(ii)]
	the functions $\Delq^m(C2_{2j})$
for $m\ge 0$, $1\le j\le N$, $j+m\le N$, 
\item[(ii)]
	the functions $\Delq^m(C4_{2j})$
for $m\ge 0$, $1\le j\le N$, $j+m\le N$, 
\item[(iv)]
the function
\begin{align*}
	&(a^2 -3a + 2)R2_a
	+\sum_{k=1}^{a/2-1} \Bin{a}{2k} (2^{2k+1} - 4) \Delq R2_{a-2k}
	\\&
	+\sum_{k=1}^{a/2-1} 
	\Parans{2\Bin{a}{2k} -2^{2k+1}\Bin{a}{2k+1} + (2^{2k+2}-2)\Bin{a}{2k+2} } R2_{a-2k}
,
\end{align*}
where $a=2N$.	
\end{enumerate}
\end{theorem}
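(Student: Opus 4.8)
The plan is to treat the three crank families (i)--(iii) by a single uniform argument and then to reduce the rank combination (iv) to them through a rank--crank identity. Write $A = \frac{\aqprod{-q}{q^2}{\infty}}{\aqprod{q^2}{q^2}{\infty}}$ for the fixed product in the theorem. The first thing I would record is the elementary but crucial observation that $A$ is simultaneously the zeroth moment generating function of all three cranks: using $\aqprod{-q}{q^2}{\infty}\aqprod{q}{q^2}{\infty} = \aqprod{q^2}{q^4}{\infty}$ together with $\aqprod{q^2}{q^2}{\infty} = \aqprod{q^2}{q^4}{\infty}\aqprod{q^4}{q^4}{\infty}$, one checks directly that
\[
	C1(1,q) = C2(1,q) = C4(1,q) = A .
\]
This is what makes a single target space possible for all three cranks.

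For (i)--(iii) I would exploit the exponential (cumulant) structure of a crank. For each $i\in\CBrackets{1,2,4}$ write $Ci(z,q) = A\cdot\exp\Parans{\phi_i(z,q)}$ with $\phi_i = \log Ci - \log A$, so $\phi_i$ vanishes at $z=1$ and $\Delz^{2j}Ci|_{z=1} = A\cdot P_{2j}(\ell_2^{(i)},\ell_4^{(i)},\dots)$, where $P_{2j}$ is the complete Bell polynomial and $\ell_{2r}^{(i)} = \Delz^{2r}\log Ci|_{z=1}$. Since the odd $z$-derivatives of $\phi_i$ vanish at $z=1$, only even $\ell$'s survive, and every surviving monomial is weight-homogeneous of weight exactly $2j$. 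The key computation is that each $\ell_{2r}^{(i)}$ is an Eisenstein series: expanding the crank factors geometrically gives, up to a constant,
\[
	\ell_{2r}^{(i)} = 2\sum_{n=1}^\infty \frac{n^{2r-1}q^{c_i n}}{1-q^{c_i n}},
\]
with $c_1=1$, $c_2=2$, $c_4=4$, so $\ell_{2r}^{(i)}$ is a constant multiple of $E_{2r}(q^{c_i}) - 1$, a weight-$2r$ quasimodular form on $\Gamma_0(4)$ with zero constant term. Hence $\frac{Ci_{2j}}{A}\in\mathcal{W}_j(\Gamma_0(4))$, which is the base case $m=0$. To insert the $\Delq^m$ I would prove the stability lemma that $\Delq$ maps $A\cdot\mathcal{W}_n(\Gamma_0(4))$ into $A\cdot\mathcal{W}_{n+1}(\Gamma_0(4))$; this follows from $\Delq(A g) = A\Parans{\frac{\Delq A}{A}g + \Delq g}$ together with the facts that $\Delq$ raises the weight of a quasimodular form by $2$ while preserving the zero constant term, and that $\frac{\Delq A}{A} = \sum_{n\ge1}\frac{(2n-1)q^{2n-1}}{1+q^{2n-1}} + \sum_{n\ge1}\frac{2nq^{2n}}{1-q^{2n}}$ is a weight-$2$ quasimodular form on $\Gamma_0(4)$ with zero constant term. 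Parts (i)--(iii) then follow by induction on $m$, since $j+m\le N$.

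For (iv) the individual rank moments are not expected to lie in the target space, so I would not treat $R2_a$ directly. Instead the plan is to establish a rank--crank identity (the $M_2$ analog of the Atkin--Garvan partial differential equation of \cite{AG}) expressing a second-order-in-$z$ operator applied to $R2(z,q)$ as an explicit combination of $C1$, $C2$, and $C4$, with $z$ possibly replaced by $z^2$ in the crank arguments. This identity carries the real content and would be derived from the Appell--Lerch/Lambert series form of $R2(z,q)$ and the product forms of the cranks. Granting it, I would apply $\Delz^{a}|_{z=1}$ with $a=2N$ to both sides. On the left, the heat-type relation $\Delz^2 \leftrightarrow \Delq$ built into the Appell sum converts pairs of $z$-derivatives into $q$-derivatives, producing the $(a-1)(a-2)R2_a$ term together with the $\Delq R2_{a-2k}$ and $R2_{a-2k}$ terms; the Leibniz rule supplies the binomial coefficients $\binom{a}{2k}$, $\binom{a}{2k+1}$, $\binom{a}{2k+2}$, and the $z\mapsto z^2$ substitutions supply the powers of $2$, since $\Delz^{2k}$ at $z=1$ applied to a function of $z^2$ scales by $2^{2k}$. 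On the right, $\Delz^{a}|_{z=1}$ produces a linear combination of the quantities $\Delq^m Ci_{2j}$ with $j+m\le N$, each of which lies in $A\cdot\mathcal{W}_N(\Gamma_0(4))$ by (i)--(iii); since that space is a vector space, so is the entire right-hand side, and therefore so is the left-hand combination, which is precisely the expression in (iv).

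The main obstacle is the derivation of the rank--crank identity underlying (iv) together with the coefficient matching it entails; everything else is bookkeeping. Identifying the correct combination of $C1$, $C2$, $C4$ and the exact differential operator on $R2(z,q)$ is the genuinely delicate step, and verifying that $\Delz^{2N}|_{z=1}$ reproduces the stated coefficients $2^{2k+1}-4$ and $2\binom{a}{2k} - 2^{2k+1}\binom{a}{2k+1} + (2^{2k+2}-2)\binom{a}{2k+2}$ requires care, although it is mechanical once the identity is in hand.
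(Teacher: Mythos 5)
Your treatment of parts (i)--(iii) is sound and is essentially the paper's argument: everything reduces to the stability lemma that $\Delq$ maps $\frac{\aqprod{-q}{q^2}{\infty}}{\aqprod{q^2}{q^2}{\infty}}\cdot\mathcal{W}_{n}(\Gamma_0(4))$ into $\frac{\aqprod{-q}{q^2}{\infty}}{\aqprod{q^2}{q^2}{\infty}}\cdot\mathcal{W}_{n+1}(\Gamma_0(4))$, proved exactly by your computation of $\Delq A/A$ (the paper gets $-\tfrac{1}{24}\Parans{E_2(q)-2E_2(q^2)+4E_2(q^4)-3}$), plus the base case $Ci_{2j}\in A\cdot\mathcal{W}_j(\Gamma_0(4))$. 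The only difference is that you rederive the base case via the cumulant/Bell expansion, whereas the paper cites Atkin--Garvan for $\Delz^{2j}C(z,q)|_{z=1}\in\frac{1}{\aqprod{q}{q}{\infty}}\mathcal{W}_j(\Gamma_0(1))$ and then substitutes $q\mapsto q^2,q^4$ and multiplies by the fixed prefactors, using precisely your observation that $C1(1,q)=C2(1,q)=C4(1,q)=A$.

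The gap is in part (iv), and it is not merely that the key identity is left underived. The identity in question is the Bringmann--Lovejoy--Osburn PDE, whose crank side is the single product $2z\frac{\aqprod{q^2}{q^2}{\infty}^2}{\aqprod{-q}{q^2}{\infty}}\SBrackets{C(z,q^2)}^3\aqprod{-qz,-q/z}{q^2}{\infty}$, not a combination of $C1$, $C2$, $C4$. Your plan for that side --- that $\Delz^{a}|_{z=1}$ "produces a linear combination of the quantities $\Delq^m Ci_{2j}$," so that (iv) follows from (i)--(iii) by linearity --- would fail: after Leibniz one gets cross terms $\Delz^j(C(z,q^2)^3)|_{z=1}\cdot\Delz^{a-k}(\aqprod{-qz,-q/z}{q^2}{\infty})|_{z=1}$, which are not crank moments of $C1$, $C2$, $C4$; indeed the paper remarks that for $N\ge 4$ the function in (iv) does not even appear to lie in the span of the functions from (i)--(iii), which your mechanism would force for every $N$. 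What is actually needed, and what constitutes the real content of the paper's proof of (iv), are two further propositions: that $\Delz^j(C(z,q^2)^3)|_{z=1}$ lies in $\frac{1}{\aqprod{q^2}{q^2}{\infty}^3}\cdot\mathcal{W}_{j/2}(\Gamma_0(2))$ for even $j$ (and vanishes for odd $j$), and that the odd $\Delz$-derivatives at $z=1$ of the logarithmic derivative of $\aqprod{-qz,-q/z}{q^2}{\infty}$ are explicit Eisenstein combinations of $E_{\ell+1}(q)$, $E_{\ell+1}(q^2)$, $E_{\ell+1}(q^4)$, giving $\Delz^j\aqprod{-qz,-q/z}{q^2}{\infty}|_{z=1}\in\aqprod{-q}{q^2}{\infty}^2\cdot\mathcal{W}_{\Floor{j/2}}(\Gamma_0(4))$. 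One must also account for the leftover terms $(2^{a+1}-4)\Delq R2_0$ and $2R2_0$, which require $R2_0=A$ and the $\Delq A$ proposition again. A smaller inaccuracy: the powers of $2$ in the coefficients of (iv) arise from applying $\Delz^a$ to the polynomial coefficients $2(1-z)^2$, $(1+z)(1-z)$, $(1-z)^2$ of the differential operator via $\Delz^n z^2=2^n z^2$, not from $z\mapsto z^2$ substitutions inside crank arguments.
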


In Section 2 we prove Theorem \ref{TheoremMain}, in Section 3
we obtain exact relations between the rank and crank moments, and in
Section 4 we use these relations to prove various congruences for 
$\Mspt{}{n}$ and a higher order analog $\Mspt{2}{n}$.


\section{Proof of Theorem \ref{TheoremMain}}
The proof follows in the same fashion as a similar fact for the rank and crank
of a partition \cite{AG} and both the Dyson rank and M2-rank of overpartitions
along with the relevant residual cranks \cite{BLO2}.

Thinking in terms of $q$-expansions, $\Delq$ maps 
$\mathcal{W}_{n}(\Gamma_0(N))$ to $\mathcal{W}_{n+1}(\Gamma_0(N))$.
This follows from Proposition 2.11 of \cite{Ono1}, which gives that $\Delq$ takes
modular forms to quasimodular forms, and by Ramanujan's identity
that $\Delq E_2(q) = (E_2(q)^2-E_4(q))/12$. 
We note replacing $q$ by $q^m$ maps
$M_{n}(\Gamma_0(N))$ to $M_{n}(\Gamma_0(mN))$.
Since $E_2(q)-mE_2(q^m)\in M_{2}(\Gamma_0(m))$ for any positive 
integer $m$, we see that replacing $q$ by $q^m$ also maps
$\mathcal{W}_{n}(\Gamma_0(N))$ to $\mathcal{W}_{n}(\Gamma_0(mN))$.

By Atkin and Garvan \cite{AG} we know for $j\ge 1$ that $\Delz^{2j}C(z,q)|_{z=1}$ is an 
element of 
$\frac{1}{\aqprod{q}{q}{\infty}}\cdot\mathcal{W}_{j}(\Gamma_0(1))$. 
We note for $j$ odd that
$\Delz^jC(z,q)|_{z=1}=0$. Thus for $j\ge 1$ we have
$\Delz^{2j}C(z,q^2)|_{z=1}$ in 
$\frac{1}{\aqprod{q^2}{q^2}{\infty}}\mathcal{W}_{j}(\Gamma_0(2))$
and for odd $j$ we have $\Delz^jC(z,q^2)|_{z=1}=0$. 
Also we note for $j=0$ we have
$C(z,q^2)|_{z=1} = \frac{1}{\aqprod{q^2}{q^2}{\infty}}$.

But then $C1_{2j}$, $C2_{2j}$, and $C4_{2j}$, for positive $j$, are elements of
$\frac{\aqprod{-q}{q^2}{\infty}}{\aqprod{q^2}{q^2}{\infty}}\mathcal{W}_{j}(\Gamma_0(4))$.
We will show that 
$\Delq\Parans{\frac{\aqprod{-q}{q^2}{\infty}}{\aqprod{q^2}{q^2}{\infty}}}$
is an element of 
$\frac{\aqprod{-q}{q^2}{\infty}}{\aqprod{q^2}{q^2}{\infty}}\cdot\mathcal{W}_{1}(\Gamma_0(4))$.
Thus if $f$ is an element of 
$\frac{\aqprod{-q}{q^2}{\infty}}{\aqprod{q^2}{q^2}{\infty}}\cdot\mathcal{W}_{k}(\Gamma_0(4))$,
then $\Delq(f)$ is an element of
$\frac{\aqprod{-q}{q^2}{\infty}}{\aqprod{q^2}{q^2}{\infty}}\cdot\mathcal{W}_{k+1}(\Gamma_0(4))$,
by induction and the product rule.
This would prove parts $(i)$, $(ii)$, and $(iii)$.

\begin{proposition}
$\Delq \Parans{ \frac{\aqprod{-q}{q^2}{\infty}}{\aqprod{q^2}{q^2}{\infty}} }$ is an element of
$\frac{\aqprod{-q}{q^2}{\infty}}{\aqprod{q^2}{q^2}{\infty}}\cdot\mathcal{W}_{1}(\Gamma_0(4))$.
\end{proposition}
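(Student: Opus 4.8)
The plan is to pass to the logarithmic derivative and exploit the product rule for $\Delq$. Writing $A = \frac{\aqprod{-q}{q^2}{\infty}}{\aqprod{q^2}{q^2}{\infty}}$, we have $\Delq(A) = A\cdot\Delq(\log A)$, so it suffices to prove that $\Delq(\log A)$ is an element of $\mathcal{W}_{1}(\Gamma_0(4))$; multiplying such an element by $A$ then lands in $A\cdot\mathcal{W}_{1}(\Gamma_0(4))$ by definition. Thus the whole problem reduces to identifying the single function $\Delq(\log A)$ and checking it is quasimodular of weight at most $2$ with zero constant term.

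First I would rewrite $A$ as an eta-quotient. Standard product manipulations (splitting $\aqprod{-q}{q}{\infty}$ into its odd and even factors and using $\aqprod{-q}{q}{\infty} = \aqprod{q^2}{q^2}{\infty}/\aqprod{q}{q}{\infty}$) give $\aqprod{-q}{q^2}{\infty} = \frac{\aqprod{q^2}{q^2}{\infty}^{2}}{\aqprod{q}{q}{\infty}\aqprod{q^4}{q^4}{\infty}}$, hence
\[
	A = \frac{\aqprod{q^2}{q^2}{\infty}}{\aqprod{q}{q}{\infty}\aqprod{q^4}{q^4}{\infty}} = q^{1/8}\,\frac{\eta(2\tau)}{\eta(\tau)\eta(4\tau)}.
\]

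Next I would differentiate using three ingredients: $\Delq(\log q) = 1$, the logarithmic derivative $\Delq\log\eta(\tau) = \tfrac{1}{24}E_2(q)$, and the scaling rule $\Delq[h(q^m)] = m\,(\Delq h)(q^m)$. Applying these termwise to $\log A = \tfrac18\log q + \log\eta(2\tau) - \log\eta(\tau) - \log\eta(4\tau)$ yields
\[
	\Delq(\log A) = \frac18 - \frac{1}{24}E_2(q) + \frac{1}{12}E_2(q^2) - \frac16 E_2(q^4).
\]
Finally I would verify membership in $\mathcal{W}_{1}(\Gamma_0(4))$. Each $E_2(q^m)$ with $m\in\{1,2,4\}$ is quasimodular of weight $2$ on $\Gamma_0(4)$: this is immediate for $E_2(q)$, and for $m = 2,4$ one writes $E_2(q^m) = \tfrac1m\Parans{E_2(q) - \Parans{E_2(q) - mE_2(q^m)}}$, where $E_2(q) - mE_2(q^m)\in M_2(\Gamma_0(m))\subseteq M_2(\Gamma_0(4))$. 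Summing the constant terms gives $\tfrac18 - \tfrac1{24} + \tfrac1{12} - \tfrac16 = 0$, so $\Delq(\log A)$ has zero constant term and therefore lies in $\mathcal{W}_{1}(\Gamma_0(4))$, as required.

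The main point to watch is that $\Delq(\log A)$ is \emph{not} a genuine modular form: the quasimodular anomaly of $E_2$ does not cancel in the combination above. This is harmless, since membership in $\mathcal{W}_{1}(\Gamma_0(4))$ only requires quasimodularity of weight at most $2$ together with a vanishing constant term, both of which we have verified. The only genuinely delicate bookkeeping is keeping track of the fractional $q$-powers in the eta-quotient and the factors of $m$ introduced by the substitution $q\mapsto q^m$, which is precisely what forces the constant term to vanish.
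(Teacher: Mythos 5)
Your proof is correct and arrives at exactly the same identity as the paper, namely $\Delq(\log A)=-\tfrac{1}{24}\Parans{E_2(q)-2E_2(q^2)+4E_2(q^4)-3}$, with the vanishing constant term checked the same way. The only difference is presentational: you first rewrite $A$ as the eta-quotient $q^{1/8}\eta(2\tau)/(\eta(\tau)\eta(4\tau))$ and use $\Delq\log\eta(\tau)=\tfrac{1}{24}E_2(q)$, whereas the paper logarithmically differentiates the $q$-product directly and rearranges the resulting Lambert series; these are the same computation in different clothing.
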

\begin{proof}
We find
\begin{align*}
	\Delq \frac{\aqprod{-q}{q^2}{\infty}}{\aqprod{q^2}{q^2}{\infty}}
	&=
	\frac{\aqprod{-q}{q^2}{\infty}}{\aqprod{q^2}{q^2}{\infty}}
	\Parans{
		\sum_{n=0}^\infty \frac{(2n+1)q^{2n+1}}{1+q^{2n+1}}
		+\sum_{n=1}^\infty \frac{2nq^{2n}}{1-q^{2n}}
	}
	\\
	&=
	\frac{\aqprod{-q}{q^2}{\infty}}{\aqprod{q^2}{q^2}{\infty}}
	\Parans{
		\sum_{n=1}^\infty \frac{nq^{n}}{1-q^{n}}
		-\sum_{n=1}^\infty \frac{2nq^{2n}}{1-q^{2n}}
		+\sum_{n=1}^\infty \frac{4nq^{4n}}{1-q^{4n}}
	}
	\\
	&=
	\frac{-\aqprod{-q}{q^2}{\infty}}{24\aqprod{q^2}{q^2}{\infty}}
	\Parans{
		E_2(q)
		-2E_2(q^2)
		+4E_2(q^4)
		-3
	}
.
\end{align*}
Since $E_2(q)-2E_2(q^2)+4E_2(q^4)-3$ is a quasimodular form and in the 
$q$-expansion there is no constant term, we have 
$\Delq\Parans{ \frac{\aqprod{-q}{q^2}{\infty}}{\aqprod{q^2}{q^2}{\infty}}}$ 
is an element
of 
$\frac{\aqprod{-q}{q^2}{\infty}}{\aqprod{q^2}{q^2}{\infty}}\cdot\mathcal{W}_{1}(\Gamma_0(4))$.
\end{proof}

For part $(iv)$, we start with a partial differential equation proved by 
Bringmann, Lovejoy, and Osburn in \cite{BLO1}:
\begin{align}\label{EqThePde}
	&2z\frac{\aqprod{q^2}{q^2}{\infty}^2}{\aqprod{-q}{q^2}{\infty}}
	\SBrackets{C(z,q^2)}^3\aqprod{-qz,-q/z}{q^2}{\infty}	
	&=
	\Parans{2(1-z)^2\Delq + (1+z)(1-z)\Delz + 2z + (1-z)^2\Delz^2}	R2(z,q)
.
\end{align}

We note that for $n\ge 1$
\begin{align*}
	\Delz^n z^m &= m^n z^m,
\end{align*}
in particular
\begin{align*}
	&\Delz^n z = z
	,&
	\Delz^n z^2 = 2^n z^2
.
\end{align*}

Also by Leibniz's rule
\begin{align*}
	\Delz^n (f\cdot g)
	&=
	\sum_{k=0}^n \Bin{n}{k} (\Delz^k f)(\Delz^{n-k} g)
,
\end{align*}
thus
\begin{align*}
	\Delz^n (z \cdot f)|_{z=1}
	&=
	\sum_{k=0}^n \Bin{n}{k} \Delz^{n-k} f
	,&
	\Delz^n (z^2 \cdot f)|_{z=1}
	=
	\sum_{k=0}^n \Bin{n}{k} 2^k \Delz^{n-k} f
.
\end{align*}

The right hand side of (\ref{EqThePde}) is
\begin{align*}
	&(2z^2-4z+2)\Delq R2(z,q)
	+(-z^2+1)\Delz R2(z,q)
	+2z R2(z,q)
	+(z^2-2z+1)\Delz^2 R2(z,q)
,
\end{align*}
so applying $\Delz^a$, for positive even $a$, and setting $z=1$ gives
\begin{align*}
	&\sum_{k=0}^a \Bin{a}{k} (2^{k+1} - 4) \Delz^{a-k} \Delq R2(z,q) |_{z=1}
	+2\Delz^a \Delq R2(z,q)	|_{z=1}
   \\&
	-\sum_{k=0}^a \Bin{a}{k} 2^k \Delz^{a-k+1} R2(z,q) |_{z=1}
	+\Delz^{a+1} R2(z,q) |_{z=1}
	+2\sum_{k=0}^a \Bin{a}{k} \Delz^{a-k} R2(z,q) |_{z=1}
	\\&
	+\sum_{k=0}^a \Bin{a}{k} (2^k-2) \Delz^{a-k+2} R2(z,q) |_{z=1}
	+\Delz^{a+2} R2(z,q) |_{z=1}
	\\
	=&
		\sum_{k=1}^a \Bin{a}{k} (2^{k+1} - 4) \Delq R2_{a-k}
		-\sum_{k=0}^{a-1} \Bin{a}{k+1} 2^{k+1} R2_{a-k}
		+2\sum_{k=0}^a \Bin{a}{k} R2_{a-k}
		\\&
		+\sum_{k=-1}^{a-2} \Bin{a}{k+2} (2^{k+2}-2) R2_{a-k}
.
\end{align*}	

Noting $R2_m = 0$ for $m$ odd, the above is
\begin{align*}
	&
	\sum_{k=1}^{a/2-1} \Bin{a}{2k} (2^{2k+1} - 4) \Delq R2_{a-2k}
	\\&
	+\sum_{k=1}^{a/2-1} 
	\Parans{2\Bin{a}{2k} -2^{2k+1}\Bin{a}{2k+1} + (2^{2k+2}-2)\Bin{a}{2k+2} } R2_{a-2k}
	\\&
	+ (2^{a+1} -4)\Delq R2_0
	+ (a^2 -3a + 2)R2_a
	+ 2R2_0
.
\end{align*}

Applying $\Delz^a$ to the left hand side of (\ref{EqThePde}) we get
\begin{align*}
	&2\frac{\aqprod{q^2}{q^2}{\infty}^2}{\aqprod{-q}{q^2}{\infty}}
	\sum_{k=0}^a
	\Bin{a}{k}
	\Delz^k(zC(z,q^2)^3)  |_{z=1}
	\Delz^{a-k}(\aqprod{-zq}{-q/z}{\infty})  |_{z=1}
	\\
	=&
	2\frac{\aqprod{q^2}{q^2}{\infty}^2}{\aqprod{-q}{q^2}{\infty}}
	\sum_{k=0}^{a-1}
	\sum_{j=0}^k
	\Bin{a}{k}\Bin{k}{j}
	\Delz^j(C(z,q^2)^3) |_{z=1}
	\Delz^{a-k}(\aqprod{-zq}{-q/z}{\infty})|_{z=1}
		\\&+
		2\aqprod{q^2}{q^2}{\infty}^2\aqprod{-q}{q^2}{\infty}
		\sum_{j=1}^a
		\Bin{a}{j}
		\Delz^j(C(z,q^2)^3) |_{z=1}
		+
		2\frac{\aqprod{-q}{q^2}{\infty}}{\aqprod{q^2}{q^2}{\infty}}
.
\end{align*}

Noting $M2_0 = R2_0 = \frac{\aqprod{-q}{q^2}{\infty}}{\aqprod{q^2}{q^2}{\infty}}$, we
have now
\begin{align}\label{EqReducedPde}
	&-(2^{a+1} -4)\Delq R2_0
		+2\frac{\aqprod{q^2}{q^2}{\infty}^2}{\aqprod{-q}{q^2}{\infty}}
		\sum_{k=0}^{a-1}
		\sum_{j=0}^k
		\Bin{a}{k}\Bin{k}{j}
		\Delz^j(C(z,q^2)^3) |_{z=1}
		\Delz^{a-k}(\aqprod{-zq}{-q/z}{\infty})|_{z=1}
		\nonumber\\&+
		2\aqprod{q^2}{q^2}{\infty}^2\aqprod{-q}{q^2}{\infty}
		\sum_{j=1}^a
		\Bin{a}{j}
		\Delz^j(C(z,q^2)^3) |_{z=1}
	\nonumber\\
	=&
	(a^2 -3a + 2)R2_a
	+\sum_{k=1}^{a/2-1} \Bin{a}{2k} (2^{2k+1} - 4) \Delq R2_{a-2k}
		\nonumber\\&
		+\sum_{k=1}^{a/2-1} 
		\Parans{2\Bin{a}{2k} -2^{2k+1}\Bin{a}{2k+1} + (2^{2k+2}-2)\Bin{a}{2k+2} } R2_{a-2k}
.
\end{align}

To prove the theorem, we show the left hand side
of (\ref{EqReducedPde}) is in 
$\frac{\aqprod{-q}{q^2}{\infty}}{\aqprod{q^2}{q^2}{\infty}}\cdot\mathcal{W}_{a/2}(\Gamma_0(4))$. 
We do this by working term by term.

By applying the Leibniz rule and examining terms, we find that
for $j$ odd 
$\Delz^j (C(z,q^2)^3) |_{z=1} = 0$
and for $j$ even and positive
$\Delz^j (C(z,q^2)^3) |_{z=1}$
is in 
$\frac{1}{\aqprod{q^2}{q^2}{\infty}^3}\cdot\mathcal{W}_{j/2}(\Gamma_0(2))$. 
Thus
\begin{align*}
	2\aqprod{q^2}{q^2}{\infty}^2\aqprod{-q}{q^2}{\infty}
	\sum_{j=1}^a
	\Bin{a}{j}
	\Delz^j(C(z,q^2)^3) |_{z=1}
\end{align*}
is and element of
$\frac{\aqprod{-q}{q^2}{\infty}}{\aqprod{q^2}{q^2}{\infty}}\cdot\mathcal{W}_{a/2}(\Gamma_0(2))$.

For $\aqprod{-qz,-q/z}{q^2}{\infty}$, we start by noting
\begin{align*}
	\Delz \aqprod{-qz,-q/z}{q^2}{\infty}
	&=
	\SBrackets{
		\sum_{n\ge 0}\frac{zq^{2n+1}}{1+zq^{2n+1}}
		-\sum_{n\ge 0}\frac{z^{-1}q^{2n+1}}{1+z^{-1}q^{2n+1}}
	}
	\aqprod{-qz,-q/z}{q^2}{\infty}
	\\
	&=\SBrackets{
		\sum_{n\ge 0}\sum_{m\ge 1} 
		(-1)^{m}(z^{-m}-z^m) q^{m(2n+1)}
	}
	\aqprod{-qz,-q/z}{q^2}{\infty}
.
\end{align*}

We set
\begin{align*}
	F(z,q) &= 
	\sum_{n\ge 0}\sum_{m\ge 1} 
	(-1)^{m}(z^{-m}-z^m) q^{m(2n+1)}
.
\end{align*}

\begin{proposition}
For $\ell$ even, $\Delz^{\ell}F(z,q) |_{z=1} = 0$. For $\ell$ odd,
$\Delz^{\ell}F(z,q) |_{z=1}$ is an element of 
$\mathcal{W}_{(\ell+1)/2}(\Gamma_0(4))$. 
\end{proposition}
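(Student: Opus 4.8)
The plan is to compute $\Delz^{\ell}F(z,q)|_{z=1}$ termwise and recognize the resulting Lambert series as a fixed rational combination of Eisenstein series over $\Gamma_0(4)$. Since $\Delz^{\ell}z^{\pm m}=(\pm m)^{\ell}z^{\pm m}$, setting $z=1$ gives $\Delz^{\ell}(z^{-m}-z^{m})|_{z=1}=m^{\ell}((-1)^{\ell}-1)$. For $\ell$ even this factor is identically $0$, so the even case is immediate. For $\ell$ odd the factor equals $-2m^{\ell}$, and summing the geometric series in $n$ yields
\begin{align*}
	\Delz^{\ell}F(z,q)|_{z=1}
	&= -2\sum_{n\ge 0}\sum_{m\ge 1}(-1)^m m^{\ell}q^{m(2n+1)}
	= -2\sum_{m\ge 1}(-1)^m m^{\ell}\frac{q^m}{1-q^{2m}}
.
\end{align*}

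Next I would reduce this to the standard Lambert series $L_{\ell}(q)=\sum_{m\ge 1}m^{\ell}q^m/(1-q^m)$. Using the elementary identity $\frac{q^m}{1-q^{2m}}=\frac{q^m}{1-q^m}-\frac{q^{2m}}{1-q^{2m}}$ and summing over $m$ gives $\sum_{m\ge 1}m^{\ell}\frac{q^m}{1-q^{2m}}=L_{\ell}(q)-L_{\ell}(q^2)$. Splitting the twisted sum by the parity of $m$ (writing $m=2r$ in the even part, which contributes a factor $2^{\ell}$ together with a substitution $q\mapsto q^2$) then produces
\begin{align*}
	\sum_{m\ge 1}(-1)^m m^{\ell}\frac{q^m}{1-q^{2m}}
	&= -L_{\ell}(q)+(2^{\ell+1}+1)L_{\ell}(q^2)-2^{\ell+1}L_{\ell}(q^4)
.
\end{align*}
Thus $\Delz^{\ell}F(z,q)|_{z=1}$ is the explicit combination $2L_{\ell}(q)-2(2^{\ell+1}+1)L_{\ell}(q^2)+2^{\ell+2}L_{\ell}(q^4)$.

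Finally I would invoke the Eisenstein identity. For $\ell$ odd the weight $\ell+1$ is even and $L_{\ell}(q)=\frac{B_{\ell+1}}{2(\ell+1)}(1-E_{\ell+1}(q))$, a constant plus a weight-$(\ell+1)$ quasimodular form on $\Gamma_0(1)$ whose constant term is $0$; for $\ell\ge 3$ this is a genuine modular Eisenstein series, and for $\ell=1$ it is assembled from $E_2$. Since $2\cdot(\ell+1)/2=\ell+1$, this places $L_{\ell}(q)$ in $\mathcal{W}_{(\ell+1)/2}(\Gamma_0(1))$, and by the preliminary remark that $q\mapsto q^m$ sends $\mathcal{W}_n(\Gamma_0(N))$ into $\mathcal{W}_n(\Gamma_0(mN))$, the terms $L_{\ell}(q^2)$ and $L_{\ell}(q^4)$ lie in $\mathcal{W}_{(\ell+1)/2}(\Gamma_0(2))$ and $\mathcal{W}_{(\ell+1)/2}(\Gamma_0(4))$ respectively. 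As each of $L_{\ell}(q)$, $L_{\ell}(q^2)$, $L_{\ell}(q^4)$ has vanishing constant term, the whole combination does as well, so $\Delz^{\ell}F(z,q)|_{z=1}\in\mathcal{W}_{(\ell+1)/2}(\Gamma_0(4))$.

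The step needing the most care is the case $\ell=1$: there $L_1=\frac{1}{24}(1-E_2)$ involves the quasimodular $E_2$ rather than a holomorphic modular form, so I must use that $\frac{1}{24}(1-E_2(q))$ already sits in $\mathcal{W}_1(\Gamma_0(1))$ with zero constant term, whence its substitutions land in $\mathcal{W}_1(\Gamma_0(2))$ and $\mathcal{W}_1(\Gamma_0(4))$; this is exactly the normalization built into the definition of $\mathcal{W}_n$. The remaining work is purely the bookkeeping of weights and constant terms carried out above.
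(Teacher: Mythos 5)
Your proof is correct and follows essentially the same route as the paper's: termwise application of $\Delz^{\ell}$ kills the even case immediately, and for odd $\ell$ the resulting Lambert series is rewritten as a combination of $E_{\ell+1}(q)$, $E_{\ell+1}(q^2)$, $E_{\ell+1}(q^4)$ with vanishing constant term, with $\ell=1$ handled via the quasimodularity of $E_2$. Your intermediate expression $2L_{\ell}(q)-2(2^{\ell+1}+1)L_{\ell}(q^2)+2^{\ell+2}L_{\ell}(q^4)$ agrees exactly with the paper's final Eisenstein-series identity, so the bookkeeping checks out.
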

\begin{proof}
We see
\begin{align*}
	\Delz^{\ell}	F(z,q) |_{z=1}
	&=
	\sum_{n\ge 0}\sum_{m\ge 1} 
		(-1)^{m}((-1)^\ell m^\ell -m^\ell ) q^{m(2n+1)}
	\\
	&=
	\PieceTwo{0}{-2\sum_{n\ge 0}\sum_{m\ge 1} (-1)^m m^\ell q^{m(2n+1)}}
	{\ell \mbox{ even}}{\ell \mbox{ odd}}
.
\end{align*}
Next we use
\begin{align*}
	-2\sum_{n\ge 0}\sum_{m\ge 1} (-1)^{m}m^\ell  q^{m(2n+1)}
	&=
	-4\sum_{m\ge 1} \frac{(2m)^\ell q^{2m}}{1-q^{2m}}
	+2\sum_{m\ge 1} \frac{m^\ell q^m}{1-q^{2m}}
.
\end{align*}
We note
\begin{align*}
	2\sum_{m\ge 1} \frac{m^\ell q^m}{1-q^{2m}}
	&=
	2\sum_{m\ge 1} \frac{m^\ell q^m}{1-q^m}
	-2\sum_{m\ge 1} \frac{m^\ell q^{2m}}{1-q^{2m}}
	=
	-\frac{B_{\ell+1}}{\ell+1}\Parans{E_{\ell+1}(q)-E_{\ell+1}(q^2)}
.
\end{align*}

Thus
\begin{align*}
	&-2\sum_{n\ge 0}\sum_{m\ge 1} (-1)^{m}m^\ell  q^{m(2n+1)}
	=
	-\frac{B_{\ell+1}}{\ell+1}\Parans{
		E_{\ell+1}(q)-E_{\ell+1}(q^2)-2^{\ell+1}E_{\ell+1}(q^2)
		+2^{\ell+1}E_{\ell+1}(q^4)
	}
.
\end{align*}
For $\ell>1$ odd this is a modular form of weight $\ell+1$ with respect to $\Gamma_0(4)$,
for $\ell=1$ this is a quasimodular form of weight $2$ with respect to $\Gamma_0(4)$,
noting there is no constant term we see
$\Delz^{\ell}F(z,q) |_{z=1}$ is in
$\mathcal{W}_{(\ell+1)/2}(\Gamma_0(4))$.

\end{proof}

By inducting on $j$, we find for $j\ge 1$ that
$\Delz^j \aqprod{-qz,-q/z}{q^2}{\infty}|_{z=1} \in 
\aqprod{-q}{q^2}{\infty}^2\mathcal{W}_{\Floor{j/2}}(\Gamma_0(4))$.

Working term by term we then find

\begin{align*}
	&2\frac{\aqprod{q^2}{q^2}{\infty}^2}{\aqprod{-q}{q^2}{\infty}}
	\sum_{k=0}^{a-1}
	\sum_{j=0}^k
	\Bin{a}{k}\Bin{k}{j}
	\Delz^j(C(z,q^2)^3) |_{z=1}
	\Delz^{a-k}(\aqprod{-zq}{-q/z}{\infty})|_{z=1}
\end{align*}
is an element of 
$\frac{\aqprod{-q}{q^2}{\infty}}{\aqprod{q^2}{q^2}{\infty}}
\mathcal{W}_{a/2}(\Gamma_0(4))$.

The only remaining term is $-(2^{a+1} -4)\Delq R2_0$. But
$R2_0 = \frac{\aqprod{-q}{q^2}{\infty}}{\aqprod{q^2}{q^2}{\infty}}$
and we have already verified 
$\Delq\Parans{\frac{\aqprod{-q}{q^2}{\infty}}{\aqprod{q^2}{q^2}{\infty}}}$ is an 
element of
$\frac{\aqprod{-q}{q^2}{\infty}}{\aqprod{q^2}{q^2}{\infty}}
\mathcal{W}_{1}(\Gamma_0(4))$. This accounts for all the terms on the left hand
side of (\ref{EqReducedPde}), so the theorem is proved.

\section{Exact Relations}

We note since every quasimodular form with respect to $\Gamma_0(N)$ can be written 
uniquely as a polynomial in $E_2$ with coefficients that are modular forms 
with respect to
$\Gamma_0(N)$, this is by Proposition 1 of \cite{MZ}, each element of 
$\mathcal{W}_N(\Gamma_0(4))$ can be written uniquely in the form
\begin{align*}
	\sum_{n=0}^N E_2(q)^n f_n(q)
	,\hspace{30pt}\mbox{where }
	f_n &\in \sum_{j=0}^{2N-2n} M_{j}(\Gamma_0(4))
. 
\end{align*}
However, not all elements of this form are in $\mathcal{W}_N(\Gamma_0(4))$, since we
require the elements of $\mathcal{W}_N(\Gamma_0(4))$ to have zero constant term.
In terms of computing the dimension of $\mathcal{W}_N(\Gamma_0(4))$, this simply
means we must subtract $1$.
That is,
\begin{align*}
	Dim(\mathcal{W}_N(\Gamma_0(4))) 
	&=  
	\Parans{\sum_{n=0}^{N}\sum_{j=0}^{2N-2n} Dim(M_{j}(\Gamma_0(4)))} - 1
	=
	N+\sum_{n=0}^{N}\sum_{j=1}^{N-n} Dim(M_{2j}(\Gamma_0(4)))
.
\end{align*}

In particular $\CBrackets{Dim(\mathcal{W}_N(\Gamma_0(4)))}_{N=1}^\infty = 
\CBrackets{3,9,19,34,55,\dots}$. We recall that
\\ $Dim\Parans{\CommonSpace{N}} = Dim(\mathcal{W}_N(\Gamma_0(4)))$.

For $N=2$ and $3$ we will find the function in part $(iv)$ of Theorem 
\ref{TheoremMain} to be in the span of the functions from parts $(i)$, $(ii)$,
and $(iii)$. We list these exact relations. The $N=1$ case is of no interest
since the function in part $(iv)$ is zero.

\begin{corollary}
For $n\ge 0$,
\begin{align}
\label{RTwo4ExactRelation}
	6N2_4(n) + (24n-6)N2_2(n)
	&=
	\frac{516-1356n}{469} M1_2(n) 
	+\frac{120}{67} M1_4(n)  
	+\frac{960-360n}{469} M2_2(n)
	-\frac{4896}{469} M2_4(n)
	\nonumber\\&\quad 
	-\frac{2976+5424n}{469} M4_2(n)  
	+\frac{1920}{67} M4_4(n) 
.
\end{align}
\end{corollary}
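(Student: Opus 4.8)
The plan is to read the corollary as a single generating-function identity, extracted coefficient by coefficient, among functions that all live in the nine-dimensional space $V := \CommonSpace{2}$.

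First I would pin down the left-hand side. Specializing the function in part~(iv) of Theorem~\ref{TheoremMain} to $a = 2N = 4$ gives
\[
  G := 6\,R2_4 + 24\,\Delq R2_2 - 6\,R2_2,
\]
using $a^2-3a+2 = 6$, $\Bin{4}{2}(2^{3}-4) = 24$, and $2\Bin{4}{2} - 2^{3}\Bin{4}{3} + (2^{4}-2)\Bin{4}{4} = -6$. Since the $q^n$-coefficient of $\Delq f$ is $n$ times that of $f$, the $q^n$-coefficient of $G$ is precisely $6N2_4(n) + (24n-6)N2_2(n)$, which is the left side of \eqref{RTwo4ExactRelation}.

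Next I would reconstruct the right-hand side as the coefficients of a combination of the nine functions supplied by parts~(i)--(iii) for $N=2$, namely $C\ell_2$, $\Delq C\ell_2$, and $C\ell_4$ for $\ell \in \{1,2,4\}$. The coefficients of $M1_2(n)$, $M2_2(n)$, $M4_2(n)$ in \eqref{RTwo4ExactRelation} are affine in $n$; splitting each into its constant and linear parts assigns the constant part to a $C\ell_2$ term and the linear part to a $\Delq C\ell_2$ term, while the constant coefficients of $M\ell_4(n)$ come from $C\ell_4$. Thus the corollary is equivalent to the identity $G = H$, where
\[
  H := \tfrac{516}{469}C1_2 - \tfrac{1356}{469}\Delq C1_2 + \tfrac{120}{67}C1_4
  + \tfrac{960}{469}C2_2 - \tfrac{360}{469}\Delq C2_2 - \tfrac{4896}{469}C2_4
  - \tfrac{2976}{469}C4_2 - \tfrac{5424}{469}\Delq C4_2 + \tfrac{1920}{67}C4_4.
\]

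By Theorem~\ref{TheoremMain} both $G$ and each of the nine functions lie in $V$, and $\dim V = \dim \mathcal{W}_2(\Gamma_0(4)) = 9$. The heart of the proof is therefore linear algebra: I would first check that the nine functions are linearly independent, so that they form a basis of $V$ and $G$ has a unique expansion in them. To make this effective, divide every function by the common product $\frac{\aqprod{-q}{q^2}{\infty}}{\aqprod{q^2}{q^2}{\infty}}$ so that all ten become elements of $\mathcal{W}_2(\Gamma_0(4))$, whose $q$-expansions I would compute by logarithmic differentiation of the defining infinite products for $R2$, $C1$, $C2$, $C4$, differentiating in $z$ via Leibniz and setting $z=1$. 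Choosing nine coefficient indices at which the $9\times 9$ matrix of basis $q$-coefficients is invertible turns the coordinate map $V \to \mathbb{Q}^9$ into an isomorphism; solving the resulting rational linear system for the coordinates of $G$ then produces the coefficients displayed in $H$, and comparing the same nine coordinates of $G$ and $H$ proves $G = H$ exactly. Extracting the $q^n$-coefficient gives \eqref{RTwo4ExactRelation}.

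I expect the main obstacle to be the explicit computation rather than any conceptual difficulty: correctly generating the $q$-expansions of $N2_2, N2_4$ and the six crank moments $M\ell_2, M\ell_4$, fixing a convenient basis of $\mathcal{W}_2(\Gamma_0(4))$ (as monomials $E_2^i$ times modular forms on $\Gamma_0(4)$ of complementary weight with zero constant term), and verifying the nine functions are independent so that the rational coefficients $516/469, \dots, 1920/67$ are uniquely forced. That every coefficient can be written over the common denominator $469 = 7\cdot 67$ (for instance $120/67 = 840/469$ and $1920/67 = 13440/469$) is a useful internal consistency check on the linear solve.
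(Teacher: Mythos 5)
Your proposal is correct and follows essentially the same route as the paper: specialize part~(iv) of Theorem~\ref{TheoremMain} to $a=4$, verify that the nine functions from parts~(i)--(iii) are linearly independent in the nine-dimensional space $\frac{\aqprod{-q}{q^2}{\infty}}{\aqprod{q^2}{q^2}{\infty}}\cdot\mathcal{W}_{2}(\Gamma_0(4))$ so that they form a basis, solve the resulting linear system for the coordinates of $6R2_4+24\Delq R2_2-6R2_2$ (the paper does this with Maple), and extract coefficients of $q^n$. Your added detail on making the coefficient-extraction map an isomorphism via an invertible $9\times 9$ matrix of $q$-coefficients is exactly the rigorous content behind the paper's computer verification.
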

\begin{proof}
With $N=2$ we have $Dim(\mathcal{W}_2(\Gamma_0(4)))=9$ and find, using Maple, 
that the $9$ functions from parts $(i)$, $(ii)$, and $(iii)$ of Theorem 
\ref{TheoremMain} are linearly independent.

Expressing the function from part $(iv)$ in terms of this basis is
\begin{align*}
	6R2_4 + 24\Delq R2_2 - 6R2_2
	&=
	\frac{516}{469} C1_2 
	-\frac{1356}{469} \Delq C1_2
	+\frac{120}{67} C1_4  
	+\frac{960}{469} C2_2 
	-\frac{360}{469} \Delq C2_2
	\\&\quad
	-\frac{4896}{469} C2_4
	-\frac{2976}{469} C4_2 
	-\frac{5424}{469} \Delq C4_2 
	+\frac{1920}{67} C4_4 
.
\end{align*}
Expressing this identity in terms of the coefficients of the series is
(\ref{RTwo4ExactRelation}). 
\end{proof}

We need one more element of $\CommonSpace{3}$.
By Theorems 1.64 and 1.65 of \cite{Ono1}, we know $\eta(2\tau)^{12}$ to be an
element of $M_6(\Gamma_0(4))$ and vanishes at the cusp infinity. Thus
\begin{align*}
	F(q) &= q\aqprod{-q}{q^2}{\infty}\aqprod{q^2}{q^2}{\infty}^{11}
\end{align*}
is an element of
$\CommonSpace{3}$.

\begin{corollary}\label{CorollaryRTwo6ExactRelation}
For $n\ge 0$,
\begin{align}
	\label{RTwo6ExactRelation}
	&20N2_6(n) + (80+60n)N2_4(n) + (420n-100)N2_2(n)
	\nonumber\\
	&=
	\frac{21624800 - 61258080n + 5880120n^2}{1119503} M1_2(n) 
	+\frac{5256200-584400n}{159929}M1_4(n) 
	+\frac{320}{341}M1_6(n)
	\nonumber\\&\quad
	+\frac{35188800 - 11366640n - 1945200n^2}{1119503}M2_2(n) 
	-\frac{187116960 + 3942720n}{1119503}M2_4(n) 
	-\frac{7680}{341}M2_6(n)
	\nonumber\\&\quad	
	+\Parans{\frac{-114563200+23520480n^2}{1119503}-\frac{20044320n}{101773}}M4_2(n) 
	+\Parans{\frac{2422400}{5159}	- \frac{9350400n}{159929}}M4_4(n)
	\nonumber\\&\quad
	+\frac{20480}{341}M4_6(n)	
.
\end{align}
\end{corollary}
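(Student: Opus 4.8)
The plan is to run the same argument used for the $N=2$ corollary, now with $N=3$, so that the function from part $(iv)$ of Theorem \ref{TheoremMain} is its $a=2N=6$ specialization. By Theorem \ref{TheoremMain} this function lies in $\CommonSpace{3}$, which we have recorded to have dimension $19$. For each of the three cranks, parts $(i)$--$(iii)$ supply the functions $\Delq^m(C1_{2j})$, $\Delq^m(C2_{2j})$, $\Delq^m(C4_{2j})$ with $1\le j\le 3$, $m\ge 0$, $j+m\le 3$; the admissible pairs $(j,m)$ are $(1,0),(1,1),(1,2),(2,0),(2,1),(3,0)$, so each crank contributes $6$ functions and we obtain $18$ functions lying in $\CommonSpace{3}$. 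Written out, the $a=6$ part $(iv)$ function is $20R2_6+60\Delq R2_4+80R2_4+420\Delq R2_2-100R2_2$, whose $n$th coefficient is exactly the left side of (\ref{RTwo6ExactRelation}).

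The one genuinely new point, compared with the $N=2$ situation, is that $18<19$, so the crank functions alone cannot span $\CommonSpace{3}$. I would therefore adjoin the extra form $F(q)=q\aqprod{-q}{q^2}{\infty}\aqprod{q^2}{q^2}{\infty}^{11}$, already shown to lie in $\CommonSpace{3}$ since $F(q)\cdot\frac{\aqprod{q^2}{q^2}{\infty}}{\aqprod{-q}{q^2}{\infty}}=\eta(2\tau)^{12}\in M_6(\Gamma_0(4))$ vanishes at infinity. This yields $19$ candidate elements of $\CommonSpace{3}$. I would then expand these $19$ functions, together with the part $(iv)$ function, as $q$-series to sufficiently high order and verify in Maple that the $19$ crank-and-$F$ functions are linearly independent; since the ambient space has dimension $19$, independence certifies that they form a basis.

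With the basis fixed, expressing the part $(iv)$ function in it is a single linear system over $\mathbb{Q}$ with a unique solution. The crucial feature, which I expect to emerge from the computation, is that the coefficient of $F(q)$ in this expansion is $0$; this is what forces the identity to involve only the crank functions and makes (\ref{RTwo6ExactRelation}) a relation purely among moments. Granting this, I would read off the $n$th coefficient of the generating-function identity, using that $\Delq^m(C1_{2j})$ has $n$th coefficient $n^m M1_{2j}(n)$ and similarly for $C2$ and $C4$. The contributions $m=0,1,2$ for $C1_2,C2_2,C4_2$ then assemble into the quadratic-in-$n$ coefficients, the contributions $m=0,1$ for $C1_4,C2_4,C4_4$ into the linear-in-$n$ coefficients, and the single $m=0$ contributions for $C1_6,C2_6,C4_6$ into the constant coefficients, reproducing (\ref{RTwo6ExactRelation}).

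The main obstacle is the exact rational linear algebra: one must carry enough terms of every $q$-expansion both to certify that the $19\times 19$ system is nonsingular and to pin down the large numerators and denominators ($1119503$, $159929$, $341$, and so on) appearing in (\ref{RTwo6ExactRelation}). The only conceptual subtlety is the vanishing of the $F(q)$-coefficient; were it nonzero, the same method would still produce a valid identity, but one additionally involving the eta quotient $F(q)$ rather than a clean moment relation. Once independence is confirmed and the system is solved, the passage to the coefficient form is routine.
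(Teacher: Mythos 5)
Your proposal is correct and follows essentially the same route as the paper: both adjoin the eta quotient $F(q)=q\aqprod{-q}{q^2}{\infty}\aqprod{q^2}{q^2}{\infty}^{11}$ to the $18$ crank functions to obtain a basis of the $19$-dimensional space $\CommonSpace{3}$, verify linear independence by computer, express the part $(iv)$ function ($a=6$) in this basis, and observe that the coefficient of $F$ vanishes so that the identity reduces to the stated relation among moments. The paper even remarks on the same point you flag as the one conceptual subtlety, calling it ``somewhat surprising'' that the part $(iv)$ function lies in the span of the crank functions alone.
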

\begin{proof}
With $N=3$ we have $Dim(\mathcal{W}_2(\Gamma_0(4)))=19$ and find, using Maple, 
that the $18$ functions from parts $(i)$, $(ii)$, and $(iii)$ of Theorem 
\ref{TheoremMain} along with $F$ are linearly independent. Expressing the function
in part $(iv)$ in terms of this basis gives
\begin{align*}
	&20R2_6 + 60\Delq R2_4 + 420\Delq R2_2 + 80R2_4 - 100R2_2
	\\
	&=
	\frac{21624800}{1119503}C1_2
	-\frac{61258080}{1119503}\Delq C1_2
	+\frac{5880120}{1119503}\Delq^2 C1_2 
	+\frac{5256200}{159929}C1_4 
	-\frac{584400}{159929}\Delq C1_4
	+\frac{320}{341}C1_6
	\\&\quad
	+\frac{35188800}{1119503}C2_2 
	-\frac{11366640}{1119503}\Delq C2_2 
	-\frac{1945200}{1119503}\Delq^2 C2_2 
	-\frac{187116960}{1119503}C2_4 
	-\frac{3942720}{1119503}\Delq C2_4 
	-\frac{7680}{341}C2_6
	\\&\quad
	-\frac{114563200}{1119503}C4_2 
	-\frac{20044320}{101773}\Delq C4_2 
	+\frac{23520480}{1119503}\Delq^2 C4_2
	+\frac{2422400}{5159}C4_4 
	-\frac{9350400}{159929}\Delq C4_4
	+\frac{20480}{341}C4_6	
.
\end{align*}
Expressing this identity in terms of the coefficients of the series is
(\ref{RTwo6ExactRelation}).
\end{proof}
It is somewhat surprising that while the functions from $(i)$, $(ii)$, and $(iii)$ do 
not give a basis for 
$\CommonSpace{3}$, the function in part $(iv)$ is indeed in their
span.
For higher values of $N$, one can take the functions from parts  $(i)$, 
$(ii)$, and $(iii)$ and complete them to a basis for the appropriate space by
adding in known modular forms with respect to $\Gamma_0(4)$, 
however in these cases it no longer appears that the function from part $(iv)$
is in the span of just the functions from parts $(i)$, $(ii)$, and 
$(iii)$.

To derive all of our congruences we will need one more relation, here we need 
one more rank function. An overpartition of $n$ is a partition of $n$ in which
the first occurrence of a part may, or may not, be overlined. The 
overpartitions of $4$ are $4$, $\overline{4}$, $3+1$, $3+\overline{1}$, 
$\overline{3}+1$, $\overline{3}+\overline{1}$, $2+2$, $\overline{2}+2$, 
$1+1+1+1$, and $\overline{1}+1+1+1$. The Dyson rank of an 
overpartition is the largest part minus the number of parts. We let 
$\overline{N}(m,n)$ denote the number of overpartitions of $n$ with Dyson rank $m$. We set
\begin{align*}
	\overline{R}(z,q) 
	&= 
	\sum_{n=0}^\infty\sum_{m=-\infty}^\infty \overline{N}(m,n)z^mq^n
	,\\
	\overline{N}_k(n) &= \sum_{m=-\infty}^\infty m^k \overline{N}(m,n)
	,\\
	\overline{R}_k &= \overline{R}_k(q) = \sum_{n=0}^\infty \overline{N}_k(n) q^n
	= \Delz^k\ \overline{R}(z,q)  |_{z=1}
.
\end{align*}

\begin{corollary}
\begin{align}\label{EquationExtraRelation}
	F1 - \frac{159}{64}F2
	&=
	\frac{948341197409}{633638698}C1_2 
	-\frac{318249663559}{1267277396}\Delq C1_2
	+\frac{18906057102}{316819349}\Delq^2 C1_2 
	-\frac{221063911175}{181039628}C1_4
	\nonumber\\&\quad 
	-\frac{1124944110}{45259907}\Delq C1_4
	-\frac{11439407}{193006}C1_6
	-\frac{8682641651833}{5069109584}C2_2 
	-\frac{724498277229}{633638698}\Delq C2_2
	\nonumber\\&\quad
	+\frac{14799375252}{316819349}\Delq^2 C2_2
	+\frac{2398983090355}{1267277396}C2_4 
	-\frac{59855835000}{316819349}\Delq C2_4 
	+\frac{12021538}{96503}C2_6
	\nonumber\\&\quad
	+\frac{13424561341}{633638698}C4_2 
	-\frac{15708001159}{28801759}\Delq C4_2 
	+\frac{37605906528}{316819349}\Delq^2 C4_2
	+\frac{1078788930}{1459997}C4_4 
	\nonumber\\&\quad 
	-\frac{7136728080}{45259907}\Delq C4_4
	+\frac{4510496}{96503}C4_6
,
\end{align}
where
\begin{align*}
	F1 &= F1(q) =  \aqprod{q^2}{q^4}{\infty}\Parans{10R_6 + 90\Delq R_4 + 630\Delq R_2 + 40R_4 - 50R_2}
	,\\
	F2 &= F2(q) =  \frac{\aqprod{q}{q^2}{\infty}}{\aqprod{-q^2}{q^2}{\infty}}
		\Parans{20\overline{R}_6 + 120\Delq\overline{R}_4 + 1920\Delq\overline{R}_2 
			+ 275\overline{R}_4 + 215\overline{R}_2}
.
\end{align*}
\end{corollary}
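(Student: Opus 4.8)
The plan is to show that $F1$ and $F2$ are each elements of $\CommonSpace{3}$, and then to expand them against the $19$-element basis of $\CommonSpace{3}$ already exhibited in the proof of Corollary \ref{CorollaryRTwo6ExactRelation}. The coefficient $\frac{159}{64}$ is exactly the one that annihilates the single basis vector $F$ lying outside the span of the functions from parts $(i)$, $(ii)$, and $(iii)$, so that $F1-\frac{159}{64}F2$ falls into that span and can be written using only the $C1$, $C2$, $C4$ moments.

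First I would handle $F1$. Arguing as in the derivation of part $(iv)$ of Theorem \ref{TheoremMain}, but starting from the ordinary rank-crank partial differential equation of Atkin and Garvan \cite{AG} and applying $\Delz^{6}$ at $z=1$, one sees that the combination $10R_6 + 90\Delq R_4 + 630\Delq R_2 + 40R_4 - 50R_2$ is an element of $\frac{1}{\aqprod{q}{q}{\infty}}\mathcal{W}_3(\Gamma_0(1))$. The prefactor then converts cleanly: from $\aqprod{q}{q^2}{\infty}\aqprod{-q}{q^2}{\infty}=\aqprod{q^2}{q^4}{\infty}$ and $\aqprod{q}{q}{\infty}=\aqprod{q}{q^2}{\infty}\aqprod{q^2}{q^2}{\infty}$ we obtain $\frac{\aqprod{q^2}{q^4}{\infty}}{\aqprod{q}{q}{\infty}}=\frac{\aqprod{-q}{q^2}{\infty}}{\aqprod{q^2}{q^2}{\infty}}$, so multiplying by $\aqprod{q^2}{q^4}{\infty}$ sends $\frac{1}{\aqprod{q}{q}{\infty}}\mathcal{W}_3(\Gamma_0(1))$ into $\frac{\aqprod{-q}{q^2}{\infty}}{\aqprod{q^2}{q^2}{\infty}}\mathcal{W}_3(\Gamma_0(1))$. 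Since $\Gamma_0(4)\subseteq\Gamma_0(1)$ gives $\mathcal{W}_3(\Gamma_0(1))\subseteq\mathcal{W}_3(\Gamma_0(4))$, I conclude $F1\in\CommonSpace{3}$.

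Next, for $F2$ I would invoke the overpartition analog of \cite{AG} proved by Bringmann, Lovejoy, and Osburn in \cite{BLO2}: the combination $20\overline{R}_6 + 120\Delq\overline{R}_4 + 1920\Delq\overline{R}_2 + 275\overline{R}_4 + 215\overline{R}_2$ lies in the overpartition crank-moment space $\frac{\aqprod{-q}{q}{\infty}}{\aqprod{q}{q}{\infty}}\mathcal{W}_3(\Gamma_0(2))$, whose prefactor is the overpartition generating function. The prefactor matching is again exact: using $\aqprod{-q}{q^2}{\infty}\aqprod{-q^2}{q^2}{\infty}=\aqprod{-q}{q}{\infty}$ and $\aqprod{q}{q^2}{\infty}\aqprod{q^2}{q^2}{\infty}=\aqprod{q}{q}{\infty}$ one checks that $\frac{\aqprod{q}{q^2}{\infty}}{\aqprod{-q^2}{q^2}{\infty}}\cdot\frac{\aqprod{-q}{q}{\infty}}{\aqprod{q}{q}{\infty}}=\frac{\aqprod{-q}{q^2}{\infty}}{\aqprod{q^2}{q^2}{\infty}}$. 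Multiplying the bracketed combination by $\frac{\aqprod{q}{q^2}{\infty}}{\aqprod{-q^2}{q^2}{\infty}}$ therefore replaces the overpartition prefactor by $\frac{\aqprod{-q}{q^2}{\infty}}{\aqprod{q^2}{q^2}{\infty}}$ while leaving the quasimodular factor unchanged, and since $\mathcal{W}_3(\Gamma_0(2))\subseteq\mathcal{W}_3(\Gamma_0(4))$ this yields $F2\in\CommonSpace{3}$.

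With both $F1,F2\in\CommonSpace{3}$, the remainder is finite linear algebra in this $19$-dimensional space, using the basis formed by the $18$ functions from parts $(i)$, $(ii)$, $(iii)$ together with $F=q\aqprod{-q}{q^2}{\infty}\aqprod{q^2}{q^2}{\infty}^{11}$, which was shown to be a basis in Corollary \ref{CorollaryRTwo6ExactRelation}. Because all quantities live in a fixed finite-dimensional space, expanding $F1$ and $F2$ reduces to comparing finitely many $q$-coefficients in Maple; I expect the coefficients of $F$ in $F1$ and in $F2$ to stand in the ratio $\frac{159}{64}$, so that $F1-\frac{159}{64}F2$ has vanishing $F$-component and lies in the span of the $C1$, $C2$, $C4$ functions, whereupon reading off the surviving coefficients gives (\ref{EquationExtraRelation}). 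The main obstacle is the membership $F2\in\CommonSpace{3}$: it demands the precise overpartition crank-moment space from \cite{BLO2} and, most delicately, the verification that its quasimodular factor sits at a level dividing $4$ so that it persists as a $\Gamma_0(4)$ form after the prefactor multiplication; by comparison the $F1$ membership and the concluding basis expansion are routine.
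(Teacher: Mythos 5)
Your proposal matches the paper's proof: the paper likewise cites the Atkin--Garvan result for the $R$-combination and Theorem 1.1 of \cite{BLO2} for the $\overline{R}$-combination, converts the prefactors exactly as you do to place $F1$ and $F2$ in $\frac{\aqprod{-q}{q^2}{\infty}}{\aqprod{q^2}{q^2}{\infty}}\cdot\mathcal{W}_3(\Gamma_0(2))\subseteq\CommonSpace{3}$, and then expands $F1-\frac{159}{64}F2$ in the $19$-element basis from Corollary \ref{CorollaryRTwo6ExactRelation}, the coefficient $\frac{159}{64}$ being chosen precisely to cancel the $F$-component as you predict. The only superfluous worry is your final ``obstacle'': since $\Gamma_0(4)\subset\Gamma_0(2)$, membership in $\mathcal{W}_3(\Gamma_0(2))$ automatically gives membership in $\mathcal{W}_3(\Gamma_0(4))$, so no extra level verification is needed.
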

\begin{proof}
By the first line of the proof of Theorem 5.1 of \cite{AG} we know
\begin{align*}
	&10R_6 + 90\Delq R_4 + 630\Delq R_2 + 40R_4 - 50R_2
\end{align*} 
to be an element of 
$\frac{1}{\aqprod{q}{q}{\infty}}\cdot\mathcal{W}_3(\Gamma_0(1))$ and
by Theorem 1.1 of \cite{BLO2} we know
\begin{align*}
	&20\overline{R}_6 + 120\Delq\overline{R}_4 + 1920\Delq\overline{R}_2 
	+ 275\overline{R}_4 + 215\overline{R}_2
\end{align*} 
to be an element of 
$\frac{\aqprod{-q}{q}{\infty}}{\aqprod{q}{q}{\infty}}\cdot\mathcal{W}_3(\Gamma_0(2))$.
Thus $F1(q)$ and $F2(q)$ are elements of 
$\frac{\aqprod{-q}{q^2}{\infty}}{\aqprod{q^2}{q^2}{\infty}}\cdot\mathcal{W}_3(\Gamma_0(2))$.

Expressing $F1-\frac{159}{64}F2$ in terms of the basis from Corollary 
\ref{CorollaryRTwo6ExactRelation} is (\ref{EquationExtraRelation}).
\end{proof}
The reason for choosing this specific combination of $F1$ and $F2$ is that 
to express $F1$ or $F2$ in terms of the chosen basis, the function $F$ is
required, however this combination eliminates the need for $F$.

\section{Congruences For $\Mspt{1}{n}$ and $\Mspt{2}{n}$.}

We recall $\Mspt{}{n}$ is the total number of occurrences of the smallest parts in
the partitions of $n$ with smallest part even and without repeated odd parts. 
In \cite{JenningsShaffer} the author introduced a generalization of 
$\Mspt{}{n}$ given by
\begin{align*}
	\Mspt{k}{n} &= \mu 2_{2k}(n) - \eta 2_{2k}(n)
,
\end{align*}
where
\begin{align*}
	\eta 2_k(n) 
	&= 
	\sum_{m\in\mathbb{Z}} \Bin{m+\Floor{\frac{k-1}{2}}}{k} N2(m,n),
	&
	\mu 2_k(n) 
	= 
	\sum_{m\in\mathbb{Z}} \Bin{m+\Floor{\frac{k-1}{2}}}{k} M2(m,n)
.
\end{align*}
One finds that $\Mspt{1}{n} = \Mspt{}{n}$. These higher order generalizations 
have a combinatorial interpretation as a weighted count of partitions 
based on the frequency of various parts. In particular, letting S2 
denote the set of partitions with with smallest part even and without repeated
odd parts and letting $f_j(\pi)$ denote the frequency of the $j$-th even part of such
a partition $\pi$, we have
\begin{align*}
	\Mspt{2}{n}
	&=
	\sum_{\substack{\pi\in\mbox{S2}\\ |\pi|=n }}
		\Parans{
			\Bin{f_1(\pi)+1}{3}
			+f_1(\pi)\sum_{m\ge 2} \Bin{f_m(\pi) + 1}{2}
		}
.
\end{align*}

It turns out
\begin{align*}
	&\eta 2_{2k}(n) 
	= 
	\frac{1}{(2k)!}
	\sum_{m\in\mathbb{Z}} g_k(m) N2(m,n),
	&\mu 2_{2k}(n) 
	=
	\frac{1}{(2k)!} 
	\sum_{m\in\mathbb{Z}} g_k(m) M2(m,n)
,
\end{align*}
where $g_k(x) = \prod_{j=0}^{k-1}(x^2-j^2)$. One then computes that
\begin{align}
	\label{SptEquation1}
	\Mspt{}{n} &= \Mspt{1}{n} = \frac{1}{2}\Parans{M2_2(n)-N2_2(n)}
	,\\\label{SptEquation2}
	\Mspt{2}{n} &=
	\frac{1}{24}\Parans{ M2_4(n) - M2_2(n) - N2_4(n) + N2_2(n) 	}
.
\end{align}

We prove the following congruences:
\begin{theorem}\label{TheoremCongruences}
For $n\ge 0$,
\begin{align}
	\label{Congruence3n1}
	\Mspt{}{3n+1}\equiv 0 \pmod{3}
	,\\
	\label{Congruence5n1A}
	\Mspt{}{5n+1}\equiv 0 \pmod{5}
	,\\
	\label{Congruence5n3A}
	\Mspt{}{5n+3}\equiv 0 \pmod{3}
	,\\
	\label{Congruence5n}
	\Mspt{2}{5n}\equiv 0 \pmod{5}
	,\\
	\label{Congruence5n1B}
	\Mspt{2}{5n+1}\equiv 0 \pmod{5}
	,\\
	\label{Congruence5n3B}
	\Mspt{2}{5n+3}\equiv 0 \pmod{5}
	.
\end{align}
\end{theorem}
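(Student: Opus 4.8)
The plan is to deduce the six congruences from the exact relations of Section~3 together with the moment formulas (\ref{SptEquation1}) and (\ref{SptEquation2}). Write $G_1=\sum_{n\ge 0}\Mspt{}{n}q^n$ and $G_2=\sum_{n\ge 0}\Mspt{2}{n}q^n$, so that $2G_1=C2_2-R2_2$ and $24G_2=C2_4-C2_2-R2_4+R2_2$. By Theorem~\ref{TheoremMain} the crank series $C2_2,C2_4$ and their $\Delq$-images lie in $\CommonSpace{2}$, while the M2-rank series $R2_2,R2_4$ do not individually. The point is that the rank combination forced by the spt functions is exactly the one governed by (\ref{RTwo4ExactRelation}): dividing that relation by $6$ gives $R2_4-R2_2+4\Delq R2_2\in\CommonSpace{2}$, whence
\[
	24G_2+8\Delq G_1=C2_4-C2_2+4\Delq C2_2-\Parans{R2_4-R2_2+4\Delq R2_2}\in\CommonSpace{2}.
\]

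Given such a representative I would reduce modulo $\ell\in\{3,5\}$. The prefactor $\frac{\aqprod{-q}{q^2}{\infty}}{\aqprod{q^2}{q^2}{\infty}}$ equals $\eta(2\tau)/\Parans{\eta(\tau)\eta(4\tau)}$ up to a power of $q$; replacing each inverse factor $\aqprod{q}{q}{\infty}^{-1}$ by $\aqprod{q}{q}{\infty}^{\ell-1}/\aqprod{q^\ell}{q^\ell}{\infty}$, using $E_{\ell-1}\equiv 1\pmod{\ell}$ to lower weight and $E_2(q)\equiv E_2(q)-\ell E_2(q^\ell)\pmod{\ell}$ to turn the quasimodular $E_2$-pieces into genuine weight-two forms, realizes any element of $\CommonSpace{N}$ as, modulo $\ell$, the $q$-expansion of a holomorphic modular form on $\Gamma_0(4\ell)$. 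Because $\Delq$ multiplies the coefficient of $q^m$ by $m$, reading off $q^m$ from the displayed combination gives, on a class $m\equiv r\pmod{\ell}$, a single congruence $24\Mspt{2}{\ell n+r}+8r\,\Mspt{}{\ell n+r}\equiv c_r(n)\pmod{\ell}$ with $c_r(n)$ the coefficient of a modular form, and $c_r(n)\equiv 0$ on the class is a finite check against the Sturm bound. Modulo $3$ the coefficient $24$ vanishes and the class $r=1$ leaves the pure statement (\ref{Congruence3n1}); modulo $5$ the class $r=0$ kills the $\Delq G_1$ term and leaves (\ref{Congruence5n}); the classes $r=1$ and $r=3$ instead leave relations coupling $\Mspt{2}{5n+r}$ and $\Mspt{}{5n+r}$.

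To finish, the remaining congruences require separating $\Mspt{}{}$ from $\Mspt{2}{}$ on a class, and one membership in $\CommonSpace{2}$ supplies only a single relation there (while (\ref{RTwo4ExactRelation}) degenerates modulo $3$). I would obtain the missing independent data in two ways: by producing a modular representative for $G_1$ itself modulo $\ell$ — which amounts to showing the non-modular part of $R2_2$ disappears modulo $\ell$ — and by invoking the extra relation (\ref{EquationExtraRelation}), which writes a combination of the ordinary rank moments $R_k$ and the overpartition rank moments $\overline{R}_k$ through the three residual cranks; the known congruences for Andrews' spt function and for the overpartition spt functions \cite{Garvan4,BLO2} then reduce modulo $\ell$ and transfer through (\ref{EquationExtraRelation}), and through (\ref{RTwo4ExactRelation}) and (\ref{RTwo6ExactRelation}), into the second relation needed on each class. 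Combining these with the coupled relations above isolates (\ref{Congruence5n1A}), (\ref{Congruence5n3A}), (\ref{Congruence5n1B}), and (\ref{Congruence5n3B}). The main obstacle is precisely this decoupling: since the individual M2-rank moment series are not modular, every progression congruence must be read off a combination that the exact relations certify to lie in $\CommonSpace{N}$, and manufacturing enough such independent combinations — beyond the single one from (\ref{RTwo4ExactRelation}) — is what forces the use of the $N=3$ relation and of the imported ordinary and overpartition data; the subsidiary technical points are the correct reduction of the quasimodular and eta-quotient structure modulo $\ell$ and the several Sturm-bound verifications.
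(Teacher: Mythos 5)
Your reduction of the theorem to statements about rank and crank moments via (\ref{SptEquation1}) and (\ref{SptEquation2}) matches the paper's first step, and your observation that $R2_4-R2_2+4\Delq R2_2\in\CommonSpace{2}$ (part (iv) of Theorem \ref{TheoremMain} with $a=4$, divided by $6$) is correct. But from that point your argument diverges from what is actually needed and develops a genuine gap. The paper does not prove these six congruences by realizing a combination of $G_1,G_2$ as a modular form mod $\ell$ and checking coefficients against a Sturm bound; it uses the \emph{explicit} exact relations (\ref{RTwo4ExactRelation}), (\ref{RTwo6ExactRelation}), (\ref{EquationExtraRelation}), reduces each modulo $3$ or $5$ (collapsing moment orders via $m^p\equiv m\pmod p$), substitutes $n\mapsto 3n+1$, $5n$, $5n+1$, $5n+3$, and solves the resulting linear system in the moments on each residue class. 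No coefficient computation of a modular form is required for this theorem. Your route replaces known identities by unverified finite checks, which is at best a much heavier detour; more importantly, it leaves four of the six congruences unproved because of the decoupling problem you yourself identify.

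The concrete gap is in your proposed source of the extra independent relations. You suggest importing ``known congruences for Andrews' spt function and for the overpartition spt functions'' and transferring them through (\ref{EquationExtraRelation}). This does not work: those known congruences concern combinations like $N_2(n)-M_2(n)$ on specific progressions, whereas $F1$ and $F2$ involve the moment combinations $10R_6+90\Delq R_4+630\Delq R_2+40R_4-50R_2$ and $20\overline{R}_6+120\Delq\overline{R}_4+1920\Delq\overline{R}_2+275\overline{R}_4+215\overline{R}_2$ for all $n$, and there is no way to feed the former into the latter. The point you are missing is much simpler: every rank-moment coefficient in $F1$ and in $F2$ is divisible by $5$, so $F1-\frac{159}{64}F2\equiv 0\pmod 5$ identically, and (\ref{EquationExtraRelation}) reduces mod $5$ to a congruence purely among the crank moments $M1_2$, $M2_2$, $M4_2$ (this is (\ref{EquationExtraRelationMod5})). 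Together with the mod-$5$ reductions of (\ref{RTwo4ExactRelation}) and (\ref{RTwo6ExactRelation}) this gives three relations on each class $5n+r$, which is enough linear algebra to isolate $N2_2(5n+1)\equiv M2_2(5n+1)\equiv 0$, $N2_4(5n+1)\equiv M2_4(5n+1)$, $N2_j(5n+3)\equiv M2_j(5n+3)$, and $N2_4(5n)-N2_2(5n)\equiv M2_4(5n)-M2_2(5n)\pmod 5$ — exactly what (\ref{Congruence5n})--(\ref{Congruence5n3B}) and (\ref{Congruence5n1A}), (\ref{Congruence5n3A}) require. Finally, your remark that (\ref{RTwo4ExactRelation}) ``degenerates modulo $3$'' is wrong: dividing it by $3$ and reducing mod $3$ gives $2nN2_2(n)\equiv(2+n)M1_2(n)+2M2_2(n)+(2+n)M4_2(n)$, and setting $n\mapsto 3n+1$ yields $N2_2(3n+1)\equiv M2_2(3n+1)\pmod 3$, which is precisely (\ref{Congruence3n1}) — no Sturm bound needed.
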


In \cite{GarvanJennings} Garvan and the author proved the congruences
(\ref{Congruence3n1}), (\ref{Congruence5n1A}), and (\ref{Congruence5n3A}) by 
methods quite different from what we use here. There the idea is to give a 
combinatorial refinement of the congruences in terms of an spt-crank. The 
spt-crank is obtained by generalizing the generating function for 
$\Mspt{}{n}$ with an extra variable. By considering this spt-crank at a primitive
third root of unity, one is able to deduce the spt-crank evenly 
divides into three groups the number $\Mspt{}{3n+1}$.
This similarly works with a primitive fifth root of unity for the other two
congruences.

By (\ref{SptEquation1}) and (\ref{SptEquation2}), we see the congruences will 
follow from the congruences between the rank and crank moments.
\begin{theorem}
For $n\ge 0$,
\begin{align}
	N2_2(3n+1) &= M2_2(3n+1)  \pmod{3}
	,\\
	M2_2(5n) &\equiv 0 \pmod{5}
	,\\
	N2_4(5n) + 4N2_2(5n) &\equiv M2_4(5n) + 4M2_2(5n) \pmod{5}
	,\\
	N2_2(5n+1) &\equiv 0\pmod{5}
	,\\
	M2_2(5n+1) &\equiv 0\pmod{5}
	,\\
	N2_4(5n+1) &\equiv M2_4(5n+1) \pmod{5}
	,\\
	N2_2(5n+3) &\equiv M2_2(5n+3) \pmod{5}
	,\\
	N2_4(5n+3) &\equiv M2_4(5n+3) \pmod{5}
.		
\end{align}
\end{theorem}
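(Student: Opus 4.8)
The plan is to reduce each of the eight congruences to the vanishing, modulo $3$ or $5$, of the Fourier coefficients of an explicit $q$-series along an arithmetic progression, and to establish these vanishings by feeding congruences for the individual crank moment generating functions into the exact relations of Section 3. Seven of the eight congruences are modulo $5$ and one is modulo $3$. The reductions of the relations are legitimate because the denominators occurring in \eqref{RTwo4ExactRelation}, \eqref{RTwo6ExactRelation}, and \eqref{EquationExtraRelation} (such as $469$, $67$, $1119503$, $159929$, and $341$) are all coprime to $3$ and $5$. The decisive feature is that reduction modulo the prime annihilates most of the rational coefficients: clearing denominators in \eqref{RTwo4ExactRelation} and reducing modulo $5$ collapses it to
\[ 4N2_4(n) + (n+1)N2_2(n) \equiv (1-n)M1_2(n) - M2_4(n) - (1+4n)M4_2(n) \pmod 5, \]
and \eqref{RTwo6ExactRelation} reduces likewise to a relation among $N2_6, N2_4, N2_2$ and a few crank moments. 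Once the crank moments on the right are controlled on the relevant residue classes, the mod-$5$ congruences fall out of the resulting linear system.

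The heart of the argument is thus to prove mod-$5$ congruences for the crank moments $C1_{2j}, C2_{2j}, C4_{2j}$, which I would obtain through modular forms modulo $5$. By Theorem \ref{TheoremMain} each such moment equals $\frac{\aqprod{-q}{q^2}{\infty}}{\aqprod{q^2}{q^2}{\infty}}$ times an explicit quasimodular form on $\Gamma_0(4)$; for the lowest order a short computation gives the clean shape $C2_2 = \frac{\aqprod{-q}{q^2}{\infty}}{\aqprod{q^2}{q^2}{\infty}}\cdot\frac{1-E_2(q^2)}{12}$, and the remaining moments come from Leibniz's rule exactly as in Section 2. Modulo $5$ I would replace the quasimodular $E_2$ by the genuine modular form $E_6$, using $E_2\equiv E_6\pmod 5$, so that each moment becomes an eta-quotient times a holomorphic modular form modulo $5$. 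Since $\frac{\aqprod{-q}{q^2}{\infty}}{\aqprod{q^2}{q^2}{\infty}} = \frac{\aqprod{q^2}{q^2}{\infty}}{\aqprod{q}{q}{\infty}\aqprod{q^4}{q^4}{\infty}}$ has weight $-\tfrac12$ as an eta-quotient, I would first multiply by a suitable power of $\eta$, absorbing the factor modulo $5$ via $\aqprod{q}{q}{\infty}^5\equiv\aqprod{q^5}{q^5}{\infty}$, to land in a holomorphic integral-weight space on some $\Gamma_0(N)$. Applying $U_5$ together with the appropriate twist isolates the target progression and places the result in a finite-dimensional space, where vanishing becomes a finite check bounded by Sturm's theorem.

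With these crank congruences in hand, the pure-crank statements $M2_2(5n)\equiv 0$ and $M2_2(5n+1)\equiv 0$ come directly from the mod-$5$ analysis of $C2_2$, while the mixed and pure-rank statements at orders two and four follow by substituting the crank congruences into the reduced forms of \eqref{RTwo4ExactRelation} and \eqref{RTwo6ExactRelation} and solving for $N2_2$ and $N2_4$ on each residue class. Here \eqref{EquationExtraRelation} plays a crucial auxiliary role: it expresses a combination of ordinary and overpartition rank moments, which satisfy their own well-known congruences coming from the work of Atkin and Garvan \cite{AG} and of Bringmann, Lovejoy, and Osburn \cite{BLO2}, in terms of the residual cranks, and so supplies the extra order-six crank relation that the functions from parts $(i)$--$(iii)$ of Theorem \ref{TheoremMain} do not by themselves provide. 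The lone mod-$3$ congruence $N2_2(3n+1)\equiv M2_2(3n+1)\pmod 3$ must be handled separately, since clearing denominators and reducing \eqref{RTwo4ExactRelation} modulo $3$ makes every term vanish identically and the weight-two Eisenstein pieces degenerate to constants modulo $3$. For this one I would work directly with $C2_2 - R2_2 = 2\sum_n \Mspt{}{n}q^n$ via \eqref{SptEquation1}, insert a known representation of $R2(z,q)$ (cf.\ \cite{LO2}) to express $R2_2$, and run a separate mod-$3$ coefficient argument; alternatively this single congruence may be imported from the combinatorial result of \cite{GarvanJennings} through \eqref{SptEquation1}.

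The main obstacle is the crank-moment step: constructing the correct holomorphic integral-weight modular form modulo $5$ from the negative half-integral-weight prefactor $\frac{\aqprod{-q}{q^2}{\infty}}{\aqprod{q^2}{q^2}{\infty}}$ together with the quasimodular factor, and then verifying that $U_5$ applied to this form annihilates the prescribed residue classes. A secondary difficulty is the degeneracy modulo $3$ of the order-four relation, which forces the direct analysis of the M2spt series (or the appeal to \cite{GarvanJennings}) for the single congruence at that prime.
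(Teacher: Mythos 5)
Your overall framing is right in one respect: the theorem does follow by reducing the three exact relations \eqref{RTwo4ExactRelation}, \eqref{RTwo6ExactRelation}, \eqref{EquationExtraRelation} modulo $3$ and $5$ and doing linear algebra over $\mathbb{F}_p$ on each residue class. But the paper's proof is purely elementary from that point on, and your proposal both misses the one observation that makes the linear algebra close and replaces it with machinery that would not work. The missing observation is that $m^{k+p-1}\equiv m^k\pmod p$ for all integers $m$, so every moment is congruent mod $p$ to a lower one: $N2_6\equiv N2_2$, $M1_6\equiv M1_2$, $M4_4\equiv M4_2\pmod 5$, and $N2_4\equiv N2_2$, $M1_4\equiv M1_2\pmod 3$, etc. Without this, your reduced form of \eqref{RTwo6ExactRelation} retains $N2_6$ as an independent unknown (as you yourself write), and the system of two rank--crank relations plus \eqref{EquationExtraRelation} cannot be solved for $N2_2$ and $N2_4$ on any residue class. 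Moreover, your proposed ``heart of the argument'' --- proving mod-$5$ congruences for the individual crank moments $M1_{2j}$, $M2_{2j}$, $M4_{2j}$ on the progressions $5n$, $5n+1$, $5n+3$ via half-integral-weight manipulations, $U_5$, twists and Sturm bounds --- is aimed at statements that are generally false: the individual crank moments do not vanish on these progressions; only specific linear combinations of them do, and those combinations are exactly what \eqref{EquationExtraRelation} hands you for free. Indeed the left-hand side $F1-\frac{159}{64}F2$ is $\equiv 0\pmod 5$ simply because every integer coefficient of the rank moments in $F1$ and $F2$ (namely $10,90,630,40,50$ and $20,120,1920,275,215$) is divisible by $5$; no congruences for $R_k$ or $\overline{R}_k$ from \cite{AG} or \cite{BLO2} are needed or used. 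With that relation and the moment-collapsing trick, $M2_2(5n)\equiv 0$ and $M2_2(5n+1)\equiv 0$ drop out of the linear system in one line each; no Sturm-bound verification of $C2_2$ is required.

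The mod-$3$ case is also handled incorrectly. You assert that reducing \eqref{RTwo4ExactRelation} modulo $3$ ``makes every term vanish identically,'' but every numerator there is divisible by exactly one factor of $3$ while $469=7\cdot 67$ and $67$ are coprime to $3$; dividing the whole identity by $3$, collapsing $N2_4\to N2_2$, $M1_4\to M1_2$, $M4_4\to M4_2$, and reducing gives the nontrivial congruence $2nN2_2(n)\equiv(2+n)M1_2(n)+2M2_2(n)+(2+n)M4_2(n)\pmod 3$, and setting $n\mapsto 3n+1$ kills the $M1$ and $M4$ terms and yields $N2_2(3n+1)\equiv M2_2(3n+1)\pmod 3$ immediately. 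Your fallback of importing this from \cite{GarvanJennings} via \eqref{SptEquation1} proves the bare statement but is circular relative to the purpose of this theorem, which is precisely to rederive $\Mspt{}{3n+1}\equiv 0\pmod 3$ by the moment method. In short: drop the modular-forms-mod-$p$ program entirely, add the congruence $m^{k+p-1}\equiv m^k\pmod p$ between moments, divide \eqref{RTwo4ExactRelation} by $3$ (resp.\ \eqref{RTwo6ExactRelation} by $5$) before reducing, and the whole theorem is finite-field linear algebra.
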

\begin{proof}
We note that for a fixed prime $p$, $m^p\equiv m\pmod{p}$ for all integers $m$,
and so for $k\ge 1$ each $k$-th moment is congruent modulo $p$ to the 
corresponding $(k+p-1)$-th moment. For example,
\begin{align*}
	N2_4(n) &\equiv N2_2(n) \pmod{3}
	,\\
	M4_6(n) &\equiv M4_2(n) \pmod{5}
.	
\end{align*}

Dividing (\ref{RTwo4ExactRelation}) by $3$ and reducing modulo $3$ yields
\begin{align}\label{RTwo4ExactRelationMod3}
	2nN2_2(n)
	&\equiv
	(2+n)M1_2(n) + 2M2_2(n) + (2+n) M4_2 \pmod{3}
.
\end{align}
Replacing $n$ by $3n+1$ gives
\begin{align*}
	2N2_2(3n+1) &\equiv 2M2_2(3n+1)  \pmod{3}
.
\end{align*}

Reducing (\ref{RTwo4ExactRelation}) modulo $5$ yields
\begin{align}\label{RTwo4ExactRelationMod5}
	N2_4(n) + (4+4n)N2_2(n)
	&\equiv
	(4+n)M1_2(n) + M2_4(n) + (1+4n)M4_2(n) \pmod{5}
.
\end{align}
Reducing (\ref{RTwo6ExactRelation}) modulo $5$ yields
\begin{align}\label{RTwo6ExactRelationMod5}
	(1+2n)N2_4(n) + (4+4n)N2_2(n)	
	&\equiv
	(4+3n+3n^2)M1_2(n) + (4+4n)M2_2(n)+ (1+2n)M2_4(n) 
	\nonumber\\&\quad
	+ (1+2n+2n^2)M4_2(n) \pmod{5}
.
\end{align}
Reducing (\ref{EquationExtraRelation}) modulo $5$ yields
\begin{align}
	\label{EquationExtraRelationMod5}
	(1+n+3n^2)M1_2(n) + (4+2n+3n^2)M2_2(n) + (4+4n+2n^2)M4_2	
	&\equiv 
	0 \pmod{5}	
.
\end{align}

Replacing $n$ by $5n$ in (\ref{RTwo4ExactRelationMod5}), 
(\ref{RTwo6ExactRelationMod5}), and (\ref{EquationExtraRelationMod5}) yields
\begin{align}
	\label{RTwo4ExactRelationMod55n}
	N2_4(5n) + 4N2_2(5n)
	&\equiv
	4M1_2(5n) + M2_4(5n) + M4_2(5n)
	,\\
	\label{RTwo6ExactRelationMod55n}
	N2_4(5n) + 4N2_2(5n)
	&\equiv
	4M1_2(5n) + 4M2_2(5n) + M2_4(5n) + M4_2(5n)
	,\\
	\label{EquationExtraRelationMod55n}
	0
	&\equiv	
	M1_2(5n) + 4M2_2(5n) + 4M4_2(5n)
.
\end{align}
Adding (\ref{RTwo4ExactRelationMod55n}) and (\ref{EquationExtraRelationMod55n}) we
have
\begin{align*}
	N2_4(5n) + 4N2_2(5n) &\equiv M2_4(5n) + 4M2_2(5n) \pmod{5}
.
\end{align*}
Also subtracting (\ref{RTwo6ExactRelationMod55n}) and (\ref{RTwo4ExactRelationMod55n})
we have
\begin{align*}
	M2_2(5n)\equiv 0 \pmod{5}
.
\end{align*}

Replacing $n$ by $5n+1$ in (\ref{RTwo4ExactRelationMod5}), 
(\ref{RTwo6ExactRelationMod5}), and (\ref{EquationExtraRelationMod5}) yields
\begin{align}
	\label{RTwo4ExactRelationMod55n1}
	N2_4(5n+1) + 3N2_2(5n+1)
	&\equiv
	M2_4(5n+1) \pmod{5}
	,\\
	\label{RTwo6ExactRelationMod55n1}
	3N2_4(5n+1) + 3N2_2(5n+1)
	&\equiv
	3M2_2(5n+1) + 3M2_4(5n+1) \pmod{5}
	,\\
	\label{EquationExtraRelationMod55n1}
	0
	&\equiv	
	4M2_2(5n+1)
.
\end{align}
By (\ref{RTwo4ExactRelationMod55n1}), (\ref{RTwo6ExactRelationMod55n1}), and
(\ref{EquationExtraRelationMod55n1}) we find
\begin{align*}
	N2_2(5n+1)\equiv 0\pmod{5},
\end{align*}
which with (\ref{RTwo4ExactRelationMod55n1}) gives
\begin{align*}
	N2_4(5n+1) &\equiv M2_4(5n+1) \pmod{5}
.
\end{align*}

Replacing $n$ by $5n+3$ in (\ref{RTwo4ExactRelationMod5}), 
(\ref{RTwo6ExactRelationMod5}), and (\ref{EquationExtraRelationMod5}) yields
\begin{align}
	\label{RTwo4ExactRelationMod55n3}
	N2_4(5n+3) + N2_2(5n+3)
	&\equiv
	2M1_2(5n+3)+M2_4(5n+3) 
		+ 3M4_2(5n+3) \pmod{5}
	,\\
	\label{RTwo6ExactRelationMod55n3}
	2N2_4(5n+3) + N2_2(5n+3)
	&\equiv
	M2_2(5n+3) + 2M2_4(5n+3) \pmod{5}
	,\\
	\label{EquationExtraRelationMod55n3}
	0
	&\equiv	
	M1_2(5n+3) + 2M2_2(5n+3) + 4M4_2(5n+3)
	\pmod{5}
.
\end{align}
Subtracting (\ref{RTwo6ExactRelationMod55n3}) from 
(\ref{RTwo4ExactRelationMod55n3}) and then applying 
(\ref{EquationExtraRelationMod55n3}) gives
\begin{align*}
	4N2_4(5n+3) \equiv 4M2_4(5n+3) \pmod{5}
.
\end{align*}
This, along with (\ref{RTwo6ExactRelationMod55n3}), gives
\begin{align*}
	N2_2(5n+3) \equiv M2_2(5n+3) \pmod{5}
.
\end{align*}

\end{proof}

Using standard techniques for modular forms, we can establish additional 
congruences for $\Mspt{}{n}$ and $\Mspt{2}{n}$. We define the operators 
$U_m$ and $S_{m,r}$ on formal power series by
\begin{align*}
	U_m \Parans{\sum_{n=0}a(n)q^n}
	&=
	\sum_{n=0}a(mn)q^{n}	
	,\\
	S_{m,r} \Parans{\sum_{n=0}a(n)q^n}
	&=
	\sum_{n=0}a(mn+r)q^{mn+r}
.
\end{align*}
Proposition 2.22 of \cite{Ono1} states $U_m$ 
maps $M_k(\Gamma_0(N),\chi)$ to $M_k(\Gamma_0(\mbox{lcm}(N,m)),\chi)$.
In terms of $S_{m,r}$, we will only need to know where $S_{9,r}$
sends a modular form.
For $(r,3)=1$ each $S_{9,r}$ can be obtained by a linear combination of twists
by modulo $9$ Dirichlet characters. Since each Dirichlet character
modulo $9$ can be written as a product of primitive Dirichlet characters
modulo $1$, $3$, or $9$, by Lemma 4.3.10 of \cite{Miyake} we see 
for $(r,3)=1$ that
$S_{9,r}$ maps $M_k(\Gamma_0(N),\chi)$ to 
$M_k(\Gamma_0(\mbox{lcm}(N,81)),\chi)$.

\begin{theorem}
For $n\ge 0$ we have $\Mspt{2}{9n}\equiv 0\pmod{3}$.
\end{theorem}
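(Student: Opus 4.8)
The plan is to turn the claim into the vanishing modulo $9$ of every ninth coefficient of one explicit quasimodular form, and then finish with the $U_9$ operator and Sturm's bound. First I would combine \eqref{SptEquation2} with the $N=2$ exact relation \eqref{RTwo4ExactRelation}. The generating function of $24\Mspt{2}{n}$ is $C2_4 - C2_2 - R2_4 + R2_2$, and since $6R2_4 + 24\Delq R2_2 - 6R2_2$ equals the crank combination $\tilde Q_1$ on the right-hand side of \eqref{RTwo4ExactRelation}, I can solve for $R2_2 - R2_4 = 4\Delq R2_2 - \tfrac16\tilde Q_1$. Setting $H = \Parans{C2_4 - C2_2} - \tfrac16\tilde Q_1$, this gives the exact identity $\sum_{n}24\Mspt{2}{n}q^n = H + 4\Delq R2_2$, where $H$ is a quasimodular form in $\frac{\aqprod{-q}{q^2}{\infty}}{\aqprod{q^2}{q^2}{\infty}}\cdot\mathcal{W}_{2}(\Gamma_0(4))$ built entirely from the crank moments $C1_{2j}$, $C2_{2j}$, $C4_{2j}$.

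Next I would apply $U_9$ to extract the progression $n\equiv 0\pmod 9$. The non-quasimodular term contributes $[4\Delq R2_2]_{9n} = 36\,n\,N2_2(9n)$; since the odd moments vanish we have $N2(m,n)=N2(-m,n)$, so $N2_2(n)=2\sum_{m\ge 1}m^2N2(m,n)$ is even and this contribution equals $72\,n\,\tfrac{N2_2(9n)}{2}$, hence is divisible by $72$. Because $[H]_n$ is always divisible by $8$ and $24\Mspt{2}{9n}\equiv [H]_{9n}\pmod{72}$, the desired congruence $\Mspt{2}{9n}\equiv 0\pmod 3$ is equivalent to $U_9 H\equiv 0\pmod 9$, i.e. $[H]_{9n}\equiv 0\pmod 9$ for all $n$.

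To prove this I would realize $H$ as a holomorphic modular form. Multiplying $H$ by a suitable eta-quotient $\mu\equiv 1\pmod 9$ that clears the weight $-\tfrac12$ factor $\frac{\aqprod{-q}{q^2}{\infty}}{\aqprod{q^2}{q^2}{\infty}}$ and cancels the poles at the cusps produces an integer-weight holomorphic modular form $\tilde H$ on some $\Gamma_0(N)$ with $4\mid N$ satisfying $\tilde H\equiv H\pmod 9$; the $E_2$ occurring in $H$ is handled with $\Delq E_2 = (E_2^2 - E_4)/12$ and $E_2(q)-3E_2(q^3)\in M_2(\Gamma_0(3))$ so as to keep a controlled mod $9$ reduction. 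Since $U_9 = U_3^2$ and each $U_3$ sends $M_k(\Gamma_0(N),\chi)$ into $M_k(\Gamma_0(\mathrm{lcm}(N,3)),\chi)$ by Proposition 2.22 of \cite{Ono1}, $U_9\tilde H$ lies in a space of explicit weight and level (essentially $\Gamma_0(12)$, enlarged toward level $81$ as in the $S_{9,r}$ analysis). Sturm's bound then reduces $U_9\tilde H\equiv 0\pmod 9$ to checking finitely many coefficients, which is carried out by computer.

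The hard part is that the computation must be pushed to modulus $9$ rather than $3$: the factor $\tfrac{1}{24}$ in \eqref{SptEquation2} carries a $3$ in the denominator, so modulo $3$ the statement $[H]_{9n}\equiv 0$ is automatic (it follows from $E_2\equiv 1\pmod 3$ together with the parity of $N2_2$), and all the content sits in the second $3$-adic digit. Controlling the $E_2$-contributions of the quasimodular form to this precision, and choosing the clearing multiplier $\mu$ so that $\mu\equiv 1\pmod 9$ while keeping the weight and level of $\tilde H$ small enough for an effective Sturm bound, is the main obstacle; the level bookkeeping through $U_9=U_3^2$ and the $S_{9,r}$ sieve is what makes the final finite check feasible.
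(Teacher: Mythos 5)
Your proposal is correct and follows essentially the same route as the paper: both arguments use \eqref{SptEquation2} together with the $N=2$ relation \eqref{RTwo4ExactRelation} to reduce the claim to showing that every ninth coefficient of an explicit crank-moment quasimodular form in $\CommonSpace{2}$ vanishes modulo $9$ (the paper disposes of the rank term by substituting $n\mapsto 9n$ into the relation divided by $3$, so that $8nN2_2(n)$ becomes $72nN2_2(9n)\equiv 0\pmod 9$; you dispose of it by noting $36nN2_2(9n)\equiv 0\pmod{72}$ since $N2_2$ is even --- the same underlying fact), and both then clear the prefactor with an eta-quotient congruent to $1$ modulo $9$ and finish with a sieving operator ($U_9=U_3^2$ versus the paper's $S_{9,r}$ via character twists) and a Sturm-bound computation. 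The one technical correction needed is your $E_2$-normalization: $E_2(q)-3E_2(q^3)$ is not congruent to $E_2(q)$ modulo $9$ because $3E_2(q^3)\equiv 3\not\equiv 0\pmod 9$; you should instead use $E_2(q)-9E_2(q^9)\in M_2(\Gamma_0(9))$ (the paper uses $E_2(q)-27E_2(q^{27})$ together with $9E_2(q)\equiv 9E_4(q)\pmod{27}$), which is precisely the ``second $3$-adic digit'' issue you already flag as the delicate point.
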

\begin{proof}
Since
\begin{align*}
 	\Mspt{2}{n} &=
	\frac{1}{24}\Parans{ M2_4(n) - M2_2(n) - N2_4(n) + N2_2(n) 	}
,
\end{align*}
we work with the rank and crank moments modulo $9$.
Dividing (\ref{RTwo4ExactRelation}) by $3$, replacing $n$ by $9n$, and reducing
modulo $9$ yields
\begin{align*}
	2N2_4(9n) + 7N2_2(9n)
	&\equiv
	M1_2(9n) + M1_4(9n) +5M2_2(9n) 
	+ 6M2_4(9n) 
	\\&\quad
	+ 7M4_2(9n) 
	+ 7M4_4(9n)
	\pmod{9}.
\end{align*}
With this we find that
\begin{align*}
	M2_4(9n) - M2_2(9n) - N2_4(9n) + N2_2(9n)
	&\equiv
	4M1_2(9n) + 4M1_4(9n) + M2_2(9n) + 7M2_4(9n) + M4_2(9n) 
		\\&\quad
		+ M4_4(9n)
	\pmod{9}.
\end{align*}
Noting $3m^2\equiv 3m^4 \pmod{9}$, we can further rearrange terms to get
\begin{align*}
	M2_4(9n) - M2_2(9n) - N2_4(9n) + N2_2(9n)
	&\equiv
	M1_2(9n) + 7M1_4(9n) + M2_2(9n) + 7M2_4(9n) - 2M4_2(9n) 
		\\&\quad
		+ 4M4_4(9n)	
	\pmod{9}
.
\end{align*}
We let $G(q)\in \CommonSpace{2}$ be given by
\begin{align*}
	G(q) &= C1_2(q) + 7C1_4(q) + C2_2(q) + 7C2_4(q) - 2C4_2(q) + 4C4_4(q). 
\end{align*}
This time we use the basis
\begin{align*}
	&\CBrackets{ \frac{\aqprod{-q}{q^2}{\infty}}{\aqprod{q^2}{q^2}{\infty}}  
		\Parans{E_j(q^k)-1}   : j=2,4\mbox{ and } k=1,2,4} 
	\cup 
	\CBrackets{ \frac{\aqprod{-q}{q^2}{\infty}}{\aqprod{q^2}{q^2}{\infty}}  
		\Parans{E_2(q^k)^2-1}  : k=1,2,4}.
\end{align*}
Expressing $G(q)$ in terms of this basis
yields
\begin{align}\label{FunctionForMod9}
	G(q)
	&=
	\frac{\aqprod{-q}{q^2}{\infty}}{\aqprod{q^2}{q^2}{\infty}}
	\frac{1}{240}\left(
	27
	-90E_2(q) - 90E_2(q^2) + 90E_2(q^4)
	+35E_2(q)^2 + 35E_2(q^2)^2 
		\right.\nonumber\\&\qquad\qquad\qquad\qquad\left.	
		- 25E_2(q^4)^2
		+14E_4(q) + 14E_4(q^2) - 10E_4(q^4)
	\right)
.	
\end{align}
From
\begin{align*}
	E_2(q) &\equiv E_2(q) - 27E_2(q^{27}) \pmod{27},
	\\
	9E_2(q) &\equiv 9E_4(q) \pmod{27}, 
\end{align*}
and recalling that $E_2(q) - 27E_2(q^{27})\in M_2(\Gamma_0(27))$, we see 
\begin{align*}
	\frac{1}{240}&\left( 
	27
	-90E_2(q) - 90E_2(q^2) + 90E_2(q^4)
	+35E_2(q)^2 + 35E_2(q^2)^2 - 25E_2(q^4)^2
		\right.\\&\quad\left.
		+14E_4(q) + 14E_4(q^2) 
		- 10E_4(q^4)
	\right)
\end{align*}
is congruent modulo $9$ to some $G_1(q) \in M_{4}(\Gamma_0(108))$. We note 
$G_1(q)$ will have bounded 
rational coefficients with denominators relatively prime to $3$.
Next we have
\begin{align*}
	\Parans{\frac{\aqprod{q}{q}{\infty}^3}{\aqprod{q^3}{q^3}{\infty}}}^3  \equiv 1 \pmod{9}
,
\end{align*}
and so
\begin{align*}
	G(q)
	&\equiv  
	\frac{\aqprod{q^2}{q^2}{\infty}\aqprod{q}{q}{\infty}^8}
	{\aqprod{q^3}{q^3}{\infty}^3\aqprod{q^4}{q^4}{\infty}}
	G_1(q)
	\pmod{9}	.
\end{align*}

We note $S_{9,0}( G(q))\equiv 0\pmod{9}$ if 
$S_{9,0}\Parans{G(q)\aqprod{q^9}{q^9}{\infty}^3} \equiv 0\pmod{9}$,
so we consider
\begin{align*}
	q\frac{\aqprod{q^2}{q^2}{\infty}\aqprod{q}{q}{\infty}^8\aqprod{q^9}{q^9}{\infty}^3}
	{\aqprod{q^3}{q^3}{\infty}^3\aqprod{q^4}{q^4}{\infty}}
	G_1(q)
	&=
	\frac{\eta(2\tau)\eta(\tau)^8\eta(9\tau)^3}{\eta(3\tau)^3\eta(4\tau)}G_1(q)
	.
\end{align*}
By Theorems 1.64 and 1.65 of \cite{Ono1} the latter is an element of
$M_8\Parans{\Gamma_0(1728),\Jac{6}{\cdot}}$.

Since $1728=2^6\cdot 3^3$,
we know
\begin{align*}
	S_{9,1} \Parans{
		\frac{\eta(2\tau)\eta(\tau)^8\eta(9\tau)^3}{\eta(3\tau)^3\eta(4\tau)}G_1(q)
	}
	\in
	M_8\Parans{\Gamma_0(5184),\Jac{6}{\cdot}}.
\end{align*}
The Sturm bound for this space is $6912$, so we verify
\begin{align*}
	S_{9,1}\Parans{
	\frac{\eta(2\tau)\eta(\tau)^8\eta(9\tau)^3}{\eta(3\tau)^3\eta(4\tau)}G_1(q) }
	&\equiv 0 \pmod{9}
\end{align*}
by checking the congruence holds out to $q^{7000}$. This in turn implies
$S_{9,0}(G(q)) \equiv 0\pmod{9}$, which gives the congruence
$\Mspt{2}{9n}\equiv 0\pmod{3}$.

\end{proof}

It would appear, at least for small primes $\ell$, that
\begin{align*}
	\sum_{n=0}^\infty \Mspt{}{\ell n + \beta_\ell}q^n
	\equiv
	\Parans{\frac{\aqprod{q}{q}{\infty}\aqprod{q^4}{q^4}{\infty}}
		{\aqprod{q^2}{q^2}{\infty}}}^{r_\ell}
	H(q)
	\pmod{\ell}
,
\end{align*} 
where $H(q)\in M_{\frac{\ell-r_\ell}{2}+1}(\Gamma_0(4))$ with the $\beta_\ell$
and $r_\ell$ defined by $1\le \beta_\ell < \ell$, 
$8\beta_\ell\equiv 1\pmod{\ell}$, and
$r_\ell = \frac{8\beta_\ell - 1}{\ell}$.
This should be compared with Theorem 6.1 of \cite{Garvan4}, where Garvan 
derives similar congruences for the spt function up to $\ell=37$. Although we 
do not fully investigate this, we do prove the cases when $\ell=3$ and $5$.

\begin{theorem}\label{TheoremSiftCongruences}
We have
\begin{align}
	\label{Congruece3n2}
	\sum_{n=0}^\infty \Mspt{}{3n+2}q^n
	&\equiv
	\Parans{\frac{\aqprod{q}{q}{\infty}\aqprod{q^4}{q^4}{\infty}}
		{\aqprod{q^2}{q^2}{\infty}}}^5
	\pmod{3},
	\\
	\sum_{n=0}^\infty \Mspt{}{5n+2}q^n
	&\equiv
	\Parans{\frac{\aqprod{q}{q}{\infty}\aqprod{q^4}{q^4}{\infty}}
		{\aqprod{q^2}{q^2}{\infty}}}^3
	\Parans{
		E_2(q) + E_2(q^2) + 4E_2(q^4)
	}
	\nonumber\\	\label{Congruece5n2}&\qquad
	\pmod{5}
	.
\end{align}
Here we note that 
$E_2(q) + E_2(q^2) + 4E_2(q^4)
\equiv 
E_2(q) - 2E_2(q^2) - 2E_2(q^2) + 4E_2(q^4)
\pmod{5}
$,
the latter of which we recognize as a modular form with respect to 
$\Gamma_0(4)$.
\end{theorem}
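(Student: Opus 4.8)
The plan is to eliminate the rank second moment (which is not modular) in favour of the second crank moments, which are genuinely quasimodular, and then to finish exactly as in the preceding proof of $\Mspt{2}{9n}\equiv0\pmod3$. By \eqref{SptEquation1} we have $\sum_{n\ge0}\Mspt{}{n}q^n=\tfrac12(C2_2-R2_2)$. Since the $N=1$ case of Theorem~\ref{TheoremMain}(iv) is vacuous, I would not try to represent $R2_2$ on its own; instead I would use that on the progressions in question the rank moment is forced by the already-established mod~$\ell$ relations. For $\ell=3$, \eqref{RTwo4ExactRelationMod3} reads $2nN2_2(n)\equiv(2+n)M1_2(n)+2M2_2(n)+(2+n)M4_2(n)\pmod3$, and when $n\equiv2\pmod3$ the coefficient $2n$ is a unit, so solving for $N2_2$ and substituting into $\tfrac12(M2_2-N2_2)$ gives $\Mspt{}{3n+2}\equiv M1_2(3n+2)+M2_2(3n+2)+M4_2(3n+2)\pmod3$. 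For $\ell=5$, the relation \eqref{RTwo6ExactRelationMod5} at $n\equiv2\pmod5$ (where $4+4n\equiv2$ is a unit) determines $N2_2$ in terms of the crank moments and leaves $\Mspt{}{5n+2}\equiv2M1_2(5n+2)+3M4_2(5n+2)\pmod5$.

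Next I would make the second crank moments explicit. From $C1(z,q)=\aqprod{q^2}{q^4}{\infty}C(z,q)$, $C2(z,q)=\aqprod{-q}{q^2}{\infty}C(z,q^2)$, and $C4(z,q)=\frac{1}{\aqprod{q}{q^2}{\infty}}C(z,q^4)$, together with the Atkin--Garvan evaluation $\Delz^2C(z,q)|_{z=1}=\frac{1-E_2(q)}{12\aqprod{q}{q}{\infty}}$, a short product manipulation gives $C1_2=R2_0\frac{1-E_2(q)}{12}$, $C2_2=R2_0\frac{1-E_2(q^2)}{12}$, and $C4_2=R2_0\frac{1-E_2(q^4)}{12}$, where $R2_0=\frac{\aqprod{-q}{q^2}{\infty}}{\aqprod{q^2}{q^2}{\infty}}$. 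Hence the generating function of the right side from the previous paragraph is $R2_0$ times an explicit weight-two quasimodular form, namely $\Phi_3=R2_0\frac{3-E_2(q)-E_2(q^2)-E_2(q^4)}{12}$ for $\ell=3$ and $\Phi_5=R2_0\frac{5-2E_2(q)-3E_2(q^4)}{12}$ for $\ell=5$, each an element of $\CommonSpace{1}$ by Theorem~\ref{TheoremMain}. Using the operator $S_{m,r}$ introduced above, the two congruences become the sieved identities $S_{3,2}(\Phi_3)\equiv q^2G_3(q^3)\pmod3$ and $S_{5,2}(\Phi_5)\equiv q^2G_5(q^5)\pmod5$ (where $G(q^\ell)$ denotes $q\mapsto q^\ell$), with $G_3=R2_0^{-5}$ and $G_5=R2_0^{-3}(E_2(q)+E_2(q^2)+4E_2(q^4))$ the right sides of \eqref{Congruece3n2} and \eqref{Congruece5n2} and $R2_0^{-1}=\frac{\aqprod{q}{q}{\infty}\aqprod{q^4}{q^4}{\infty}}{\aqprod{q^2}{q^2}{\infty}}$.

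To finish I would follow the template of the $\Mspt{2}{9n}$ proof: reduce $\Phi_\ell$ modulo $\ell$ to a weakly holomorphic modular form by removing the quasimodular $E_2(q^d)$, using $\ell E_2(q^\ell)-E_2(q)\in M_2(\Gamma_0(\ell))$ together with the Frobenius congruences $E_{\ell-1}\equiv1$ and $E_{\ell+1}\equiv E_2\pmod\ell$, and then multiply through by a suitable eta-product so that, after clearing poles, both sides are holomorphic modular forms of one common integer weight on a single $\Gamma_0(M)$ with a quadratic character. As with $S_{9,r}$, twisting by Dirichlet characters modulo $\ell$ shows $S_{\ell,2}$ raises the level by a factor dividing $\ell^2$, so the congruence reduces to a finite check verified out to the Sturm bound.

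The main obstacle is this last step, and in particular the fact that the two sides do not have matching weights at face value: $\Phi_3$ has weight $\tfrac32$ whereas $R2_0^{-5}$ has weight $\tfrac52$, and similarly for $\ell=5$. The statements are therefore genuine congruences rather than equalities of modular forms and cannot be read off the $q$-expansions directly. Turning each into a clean finite Sturm-bound verification requires choosing the auxiliary eta-products carefully, exploiting the weight-changing congruence $\aqprod{q}{q}{\infty}^\ell\equiv\aqprod{q^\ell}{q^\ell}{\infty}\pmod\ell$ to bring both sides into a common space of honest modular forms; this bookkeeping, and the large level and Sturm bound it produces, is where the real work lies.
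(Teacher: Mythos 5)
Your proposal follows essentially the same route as the paper: both reduce $\Mspt{}{3n+2}$ and $\Mspt{}{5n+2}$ to the crank-moment combinations $M1_2+M2_2+M4_2$ and $2M1_2+3M4_2$ via the mod $3$ and mod $5$ relations, identify the corresponding generating functions as $\frac{\aqprod{-q}{q^2}{\infty}}{\aqprod{q^2}{q^2}{\infty}}$ times exactly the quasimodular forms $\frac{-1}{12}(E_2(q)+E_2(q^2)+E_2(q^4)-3)$ and $\frac{-1}{12}(2E_2(q)+3E_2(q^4)-5)$, and finish by converting the sieved congruence into an identity of honest modular forms checked to the Sturm bound. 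The bookkeeping you flag as the remaining work is carried out in the paper precisely as you outline, via $\ell E_2(q^\ell)-E_2(q)\in M_2(\Gamma_0(\ell))$ and the congruence $\aqprod{q}{q}{\infty}^{\ell}\equiv\aqprod{q^\ell}{q^\ell}{\infty}\pmod{\ell}$ applied to suitable eta-quotients.
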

\begin{proof}
Replacing $n$ by $3n+2$ in (\ref{RTwo4ExactRelationMod3}) yields
\begin{align*}
	N2_2(3n+2) \equiv M1_2(3n+2) + 2M2_2(3n+2) + M4_2(3n+2) \pmod{3}
,
\end{align*}
and so
\begin{align*}
	\Mspt{}{3n+2}
	&\equiv
	2M2_2(3n+2) - 2N2_2(3n+2)
	\\
	&\equiv
	M1_2(3n+2)+M2_2(3n+2) + M4_2(3n+2)
	\pmod{3}
.
\end{align*}

We set $G(q) = C1_2(q)+C2_2(q)+C4_2(q)$ so that $G(q)\in\CommonSpace{1}$.  
Expanding $G(q)$ in terms of the basis 
\begin{align*}
	&\CBrackets{
		\frac{\aqprod{-q}{q^2}{\infty}}{\aqprod{q^2}{q^2}{\infty}} 
		\Parans{E_2(q^n)-1}
		: n=1,2,4
	}
\end{align*}
yields
\begin{align*}
	G(q) &=
	\frac{\aqprod{-q}{q^2}{\infty}}{\aqprod{q^2}{q^2}{\infty}}
	\frac{-1}{12}
	\Parans{E_2(q)+E_2(q^2)+E_2(q^4)-3 }
.
\end{align*}
We use that
\begin{align*}
	E_2(q) &\equiv E_2(q) - 9E_2(q^9)  \pmod{9}
	,\\
	3E_2(q) &\equiv 3 \pmod{9},
\end{align*}
and recall that $E_2(q) - 9E_2(q^{9})\in M_2(\Gamma_0(9))$,
to see that $\frac{-1}{12}(E_2(q)+E_2(q^2)+E_2(q^4)-3)$
is congruent modulo $3$ to a modular form of weight $2$ with respect to
$\Gamma_0(36)$. We let $G_1(q)$ be such a modular form.
Next,
\begin{align*}
	\frac{\aqprod{q^2}{q^2}{\infty}}{\aqprod{q}{q}{\infty}\aqprod{q^4}{q^4}{\infty}}
	&\equiv 
	\frac{\aqprod{q}{q}{\infty}^8\aqprod{q^4}{q^4}{\infty}^8}{\aqprod{q^2}{q^2}{\infty}^8}
	\frac{\aqprod{q^6}{q^6}{\infty}^3}{\aqprod{q^3}{q^3}{\infty}^3\aqprod{q^{12}}{q^{12}}{\infty}^3}
	\pmod{3}
,
\end{align*}
so that
\begin{align*}
	S_{3,2}\Parans{
		\frac{\aqprod{q^2}{q^2}{\infty}}
		{\aqprod{q}{q}{\infty}\aqprod{q^4}{q^4}{\infty}}G_1(q)   
	}
	&\equiv
	\frac{\aqprod{q^6}{q^6}{\infty}^3}{\aqprod{q^3}{q^3}{\infty}^3\aqprod{q^{12}}{q^{12}}{\infty}^3}
	S_{3,2}\Parans{
		\frac{\aqprod{q}{q}{\infty}^8\aqprod{q^4}{q^4}{\infty}^8}
		{\aqprod{q^2}{q^2}{\infty}^8}G_1(q)   
	}
	\pmod{3}
.
\end{align*}
It then only remains to show
\begin{align*}
	S_{3,2}\Parans{
		\frac{\aqprod{q}{q}{\infty}^8\aqprod{q^4}{q^4}{\infty}^8}
		{\aqprod{q^2}{q^2}{\infty}^8}G_1(q)   
	}
	&\equiv
	q^2\frac{\aqprod{q^3}{q^3}{\infty}^8\aqprod{q^{12}}{q^{12}}{\infty}^8}
	{\aqprod{q^6}{q^6}{\infty}^8}	
	\pmod{3}
,
\end{align*}
which is equivalent to
\begin{align}\label{Dissection3n2LastPart}
	U_{3}\Parans{
		\frac{\eta(\tau)^8\eta(4\tau)^8}{\eta(2\tau)^8}G_1(q)   
	}
	&\equiv
	\frac{\eta(\tau)^8\eta(4\tau)^8}{\eta(2\tau)^8}
	\pmod{3}
.
\end{align}
By Theorems 1.64 and 1.65 of \cite{Ono1} we know 
$\frac{\eta(\tau)^8\eta(4\tau)^8}{\eta(2\tau)^8}$ to be an element of 
$M_4(\Gamma_0(4))$. Additionally then 
$\frac{\eta(\tau)^8\eta(4\tau)^8}{\eta(2\tau)^8}G_1(q)$
is an element of $M_6(\Gamma_0(36))$. Since $3$ divides $36$ we have
$U_3\Parans{\frac{\eta(\tau)^8\eta(4\tau)^8}{\eta(2\tau)^8}G_1(q)}$
is also an element of $M_6(\Gamma_0(36))$. Noting
$1\equiv -E_2(q)+2E_2(q^2) \pmod{3}$, we may view
(\ref{Dissection3n2LastPart}) as a congruence between elements
of $M_6(\Gamma_0(36))$. The Sturm bound for this space is $48$,
verifying the congruence for this many terms then proves
(\ref{Congruece3n2}).

Replacing $n$ by $5n+2$ in (\ref{RTwo6ExactRelationMod5}) gives
\begin{align*}
	2N2_2(5n+2) 
	&\equiv
	2M1_2(5n+2) + 2M2_2(5n+2) + 3M4_2(5n+2) \pmod{5}
,
\end{align*}
and so
\begin{align*}
	\Mspt{}{5n+2}
	&\equiv
	3M2_2(5n+2) - 3N2_2(5n+2)
	\\
	&\equiv
	2M1_2(5n+2) + 3M4_2(5n+2)
	\pmod{5}. 
\end{align*}
We take $G(q)\in\CommonSpace{1}$ given by $G(q)=2C1_2(q)+3C4_2(q)$. We find
that
\begin{align*}
	G(q)
	&=
	\frac{\aqprod{-q}{q^2}{\infty}}{\aqprod{q^2}{q^2}{\infty}} 
	\frac{-1}{12}
	\Parans{2E_2(q) + 3E_2(q^4) -5}
.
\end{align*}
We note that $\frac{-1}{12}\Parans{2E_2(q) + 3E_2(q^4) -5}$ is congruent
modulo $5$ to an element, call it $G_1(q)$, of $M_2(\Gamma_0(20))$. Along with
\begin{align*}
	\frac{\aqprod{q}{q}{\infty}^5}{\aqprod{q^5}{q^5}{\infty}} &\equiv 1 \pmod{5}
,
\end{align*}
we find that
\begin{align*}
	G(q) 
	&\equiv	
	\frac{\aqprod{q^{10}}{q^{10}}{\infty}^5}
	{\aqprod{q^{5}}{q^{5}}{\infty}^5\aqprod{q^{20}}{q^{20}}{\infty}^5}
	\frac{\aqprod{q}{q}{\infty}^{24}\aqprod{q^4}{q^4}{\infty}^{24}}
	{\aqprod{q^2}{q^2}{\infty}^{24}}
	G_1(q)
	\pmod{5}
.
\end{align*}
It then only remains to show
\begin{align*}
	S_{5,2}\Parans{
	\frac{\aqprod{q}{q}{\infty}^{24}\aqprod{q^4}{q^4}{\infty}^{24}}
	{\aqprod{q^2}{q^2}{\infty}^{24}}
	G_1(q)}
	&\equiv
	q^2\frac{\aqprod{q^{5}}{q^{5}}{\infty}^8\aqprod{q^{20}}{q^{20}}{\infty}^8}
	{\aqprod{q^{10}}{q^{10}}{\infty}^8}
	\Parans{	E_2(q^5) + E_2(q^{10}) + 4E_2(q^{20})}
	\pmod{5}
.
\end{align*}
This is equivalent to
\begin{align}\label{lastCongruenceFor5n2Dissection}
	U_5\Parans{\frac{\eta(\tau)^{24}\eta(2\tau)^{24}}{\eta(2\tau)^{24}}G_1(q) }
	&\equiv
	\frac{\eta(\tau)^{8}\eta(4\tau)^{8}}{\eta(2\tau)^{8}}
	\Parans{	E_2(q) + E_2(q^2) + 4E_2(q^4)}
	\pmod{5}
.
\end{align}
However we have
$U_5\Parans{\frac{\eta(\tau)^{24}\eta(4\tau)^{24}}{\eta(2\tau)^{24}}G_1(q) }$
is an element of $M_{14}(\Gamma_0(20))$ and
\\$\frac{\eta(\tau)^{8}\eta(4\tau)^{8}}{\eta(2\tau)^{8}}
\Parans{	E_2(q) + E_2(q^2) + 4E_2(q^4)}$
is an element of $M_{6}(\Gamma_0(4))$. Since $E_4(q)\equiv 1 \pmod{5}$, we
can view (\ref{lastCongruenceFor5n2Dissection}) as a congruence between
elements of $M_{14}(\Gamma_0(20))$, the Sturm bound for this space is
$42$. Verifying (\ref{lastCongruenceFor5n2Dissection}) holds past this
power of $q$ proves (\ref{Congruece5n2}).

\end{proof}

Since $\frac{\eta(8\tau)\eta(32\tau)}{\eta(16\tau)}
\in M_{\frac{1}{2}}\Parans{\Gamma_0(256),\Jac{2}{\cdot}}$, 
we can use Theorem \ref{TheoremSiftCongruences} to prove the congruences 
\begin{align*}
	\Mspt{}{27n+26} &\equiv 0 \pmod{3},\\
	\Mspt{}{125n+97} &\equiv 0 \pmod{5},\\
	\Mspt{}{125n+122} &\equiv 0 \pmod{5}
\end{align*}
by verifying the congruence holds for so many initial terms. However, these 
congruences are special cases of the much more general congruence,
\begin{align*}
	\Mspt{}{\frac{\ell^{2m}n+1}{8}}
	\equiv 0
	\pmod{\ell^m}
\end{align*}
for prime $\ell\ge 3$, $m\ge 1$, and $\Jac{-n}{\ell}\equiv 1$,
which is Theorem 1.4 of \cite{ABL} by Ahlgren, Bringmann, and Lovejoy.

\bibliographystyle{abbrv}
\bibliography{usingPdeRef}

\begin{thebibliography}{10}

\bibitem{ABL}
S.~Ahlgren, K.~Bringmann, and J.~Lovejoy.
\newblock {$\ell$}-adic properties of smallest parts functions.
\newblock {\em Adv. Math.}, 228(1):629--645, 2011.

\bibitem{Andrews}
G.~E. Andrews.
\newblock The number of smallest parts in the partitions of {$n$}.
\newblock {\em J. Reine Angew. Math.}, 624:133--142, 2008.

\bibitem{AG}
A.~O.~L. Atkin and F.~G. Garvan.
\newblock Relations between the ranks and cranks of partitions.
\newblock {\em Ramanujan J.}, 7(1-3):343--366, 2003.
\newblock Rankin memorial issues.

\bibitem{BG2}
A.~Berkovich and F.~G. Garvan.
\newblock Some observations on {D}yson's new symmetries of partitions.
\newblock {\em J. Combin. Theory Ser. A}, 100(1):61--93, 2002.

\bibitem{BLO2}
K.~Bringmann, J.~Lovejoy, and R.~Osburn.
\newblock Rank and crank moments for overpartitions.
\newblock {\em J. Number Theory}, 129(7):1758--1772, 2009.

\bibitem{BLO1}
K.~Bringmann, J.~Lovejoy, and R.~Osburn.
\newblock Automorphic properties of generating functions for generalized rank
  moments and {D}urfee symbols.
\newblock {\em Int. Math. Res. Not. IMRN}, (2):238--260, 2010.

\bibitem{GarvanJennings}
F.~{Garvan} and C.~{Jennings-Shaffer}.
\newblock {The spt-crank for overpartitions}.
\newblock {\em ArXiv e-prints}, Nov. 2013.

\bibitem{Garvan4}
F.~G. Garvan.
\newblock Congruences for {A}ndrews' smallest parts partition function and new
  congruences for {D}yson's rank.
\newblock {\em Int. J. Number Theory}, 6(2):281--309, 2010.

\bibitem{Garvan5}
F.~G. Garvan.
\newblock Higher order spt-functions.
\newblock {\em Adv. Math.}, 228(1):241--265, 2011.

\bibitem{JenningsShaffer}
C.~{Jennings-Shaffer}.
\newblock {Higher order spt functions for overpartitions, overpartitions with
  smallest part even, and partitions without repeated odd parts}.
\newblock {\em ArXiv e-prints}, Feb. 2014.

\bibitem{MZ}
M.~Kaneko and D.~Zagier.
\newblock A generalized {J}acobi theta function and quasimodular forms.
\newblock In {\em The moduli space of curves ({T}exel {I}sland, 1994)}, volume
  129 of {\em Progr. Math.}, pages 165--172. Birkh\"auser Boston, Boston, MA,
  1995.

\bibitem{LO2}
J.~Lovejoy and R.~Osburn.
\newblock {$M_2$}-rank differences for partitions without repeated odd parts.
\newblock {\em J. Th\'eor. Nombres Bordeaux}, 21(2):313--334, 2009.

\bibitem{Miyake}
T.~Miyake.
\newblock {\em Modular forms}.
\newblock Springer Monographs in Mathematics. Springer-Verlag, Berlin, english
  edition, 2006.
\newblock Translated from the 1976 Japanese original by Yoshitaka Maeda.

\bibitem{Ono1}
K.~Ono.
\newblock {\em The web of modularity: arithmetic of the coefficients of modular
  forms and {$q$}-series}, volume 102 of {\em CBMS Regional Conference Series
  in Mathematics}.
\newblock Published for the Conference Board of the Mathematical Sciences,
  Washington, DC, 2004.

\end{thebibliography}

\end{document}